\let\my@algocf@latexcaption\algocf@latexcaption
\let\my@addcontentsline\addcontentsline
\long\def\algocf@latexcaption#1[#2]#3{%
\def\addcontentsline##1##2##3{}%
\my@algocf@latexcaption{#1}[#2]{#3}%
\global\let\addcontentsline\my@addcontentsline%
}
\newcommand{\rright}{\right}
\newcommand{\lleft}{\left}
\newcommand{\rrvert}{\vert}
\renewcommand{\vec}[1]{{\mathbf{#1}}}
\newcommand{\supp}{\operatorname{supp}}
\def\1{\mathbf{1}}
\def\E{\mathbb{E}}
\def\V{\mathbb{V}}
\def\P{\mathbb{P}}
\def\R{\mathbb{R}}
\def\N{\mathbb{N}}
\def\C{\mathcal{C}}
\def\LR{\operatorname{LR}}
\def\perm{\operatorname{perm}}
\newcommand{\TV}{\operatorname{TV}}
\newcommand{\KL}{\operatorname{KL}}
\newcommand{\eqref}[1]{(\ref{#1})}
\newcommand{\DS}{{\mathrm{DS}}}
\newcommand{\FDR}{\operatorname{FDR}}
\newcommand{\NDR}{\operatorname{NDR}}
\renewcommand{\perm}{\operatorname{perm}}
\newcommand{\Unif}{\operatorname{Unif}}
\renewcommand{\emptyset}{\varnothing}
\renewcommand{\epsilon}{\varepsilon}
\newtheorem{theorem}{Theorem}[section]
\newtheorem{lemma}{Lemma}[section]
\newtheorem{coro}{Corollary}[section]
\newtheorem{prop}{Proposition}[section]
\begin{document}
\begin{frontmatter}

\title{Adaptive sensing performance lower bounds for sparse signal
detection and support estimation}
\runtitle{Adaptive sensing lower bounds for sparse signal estimation and testing}

\begin{aug}
\author{\inits{R.M.}\fnms{Rui M.} \snm{Castro}\ead[label=e1]{rmcastro@tue.nl}}
\address{Eindhoven University of Technology, The Netherlands.
\printead{e1}}
\dedicated{In memory of Yuri Ingster}
\end{aug}

\received{\smonth{6} \syear{2012}}
\revised{\smonth{3} \syear{2013}}

%
\begin{abstract}
This paper gives a precise characterization of the fundamental limits
of adaptive sensing
for diverse estimation and testing problems concerning sparse signals.
We consider in
particular the setting introduced in (\textit{IEEE Trans. Inform. Theory} \textbf{57} (2011) 6222--6235) and show
necessary conditions on
the minimum signal magnitude for both detection and estimation: if
$\vec{x}\in\R^n$ is a sparse
vector with $s$ non-zero components then it can be reliably detected in
noise provided the
magnitude of the non-zero components exceeds $\sqrt{2/s}$. Furthermore,
the signal support can
be exactly identified provided the minimum magnitude exceeds $\sqrt {2\log s}$. Notably there
is no dependence on $n$, the extrinsic signal dimension. These results
show that the adaptive
sensing methodologies proposed previously in the literature are
essentially optimal, and
cannot be substantially improved. In addition, these results provide
further insights on the limits of adaptive compressive sensing.
\end{abstract}

%
\begin{keyword}
\kwd{adaptive sensing}
\kwd{minimax lower bounds}
\kwd{sequential experimental design}
\kwd{sparsity-based models}
\end{keyword}

\end{frontmatter}

\section{Introduction}\label{sec:introduction}

This paper addresses the characterization of the fundamental limits of
adaptive sensing in sparse settings, when a potentially infinite number
of observations is available but there is a restriction on the sensing
precision budget available. One of the key aspects of adaptive sensing
is that the data collection process is sequential and adaptive. In
different fields these sensing/experimenting paradigms are known by
different names, such as \emph{sequential experimental design} in
statistics and economics (see Wald \cite{wald:47}, Bessler \cite{bessler:60},
Fedorov \cite{fedorov:72},
El-Gamal \cite{elgamal:91}, Hall and Molchanov \cite{hall:03}, Lai and Robbins \cite{lai:85},
Blanchard and Geman \cite{blanchard:05}),
\emph{active learning} or \emph{adaptive sensing/sampling} in
computer science, engineering and machine learning (see Cohn, Ghahramani and Jordan \cite
{cohn:96}, Freund \textit{et al.} \cite{freund:97}, Novak \cite{novak:96},
Korostelev and Kim \cite{korostelev:00}, Dasgupta \cite{dasgupta:04},
Castro, Willett and Nowak \cite{castro:05}, Dasgupta, Kalai and Monteleoni \cite{dasgupta:05a},
Dasgupta \cite{dasgupta:05b}, Hanneke \cite{hanneke:10},
Koltchiinskii \cite{koltchiinskii:10}, Balcan, Beygelzimer and Langford \cite{balcan:06},
Castro and Nowak \cite{castro:08}).

The extra flexibility of adaptive sensing can sometimes (but not
always) yield significant performance gains. In this paper, we are
particularly concerned with the setting introduced in Haupt, Castro and Nowak \cite{haupt:10},
where the authors propose an adaptive sparse signal recovery method
that provably improves on the best possible non-adaptive sensing
methods. However, in that work there is no indication on the
fundamental performance limitations in such\vadjust{\goodbreak} sensing scenarios. This
paper addresses those breeches in our understanding, and shows that the
proposed procedures are essentially asymptotically optimal for
estimation problems. Furthermore, with some modifications, the
procedure of Haupt, Castro and Nowak \cite{haupt:10} is also nearly
optimal when testing for
the presence of a sparse signal. In addition, we also present results
characterizing the fundamental limitations in several other settings,
such as exact support recovery, as in Malloy and Nowak \cite{malloy:11}, Malloy and Nowak \cite{malloy:11b} or
in Arias-Castro, Cand\`{e}s and Davenport \cite{arias-castro:11}.

\section{Problem setting}

Let $\vec{x}\in\R^n$ be an unknown vector. We assume this vector is
sparse in the sense that only a reduced number of its entries are
not-zero. In particular, let $S$ be a subset of $\{1,\ldots,n\}$ and
assume that for all $i\in\{1,\ldots,n\}$ such that $i\notin S$ we
have $x_i=0$. We refer to $S$ as the signal support set and this is our
main object of interest. In this paper, we consider two distinct
classes of problems: (i) signal support estimation, where we desire to
estimate $S$; (ii) signal detection, where we simply want to test if
$S$ belongs to some particular class.

In our model the signal $\vec{x}$ is unknown, but we can collect
partial information through noisy observations. In particular, we observe
%
\begin{equation}\label{eqn:observations}
Y_k=x_{A_k}+\Gamma^{-1}_k
W_k ,\qquad  k=1,2,\ldots,
\end{equation}
where $A_k,\Gamma_k$ are taken to be measurable functions of $\{
Y_i,A_i,\Gamma_i\}_{i=1}^{k-1}$, and $W_k$ are standard normal random
variables, independent of $\{Y_i\}_{i=1}^{k-1}$ and also independent of
$\{A_i,\Gamma_i\}_{i=1}^{k}$. In this model, $A_k\in\{1,\ldots,n\}$
corresponds to the entry of $\vec{x}$, that is, measured at time $k$,
therefore $A_k$ can be viewed as the \emph{sensing action} taken at
time $k$. Similarly, $\Gamma^2_k$ is the \emph{precision} of the
measurement taken at time $k$. Finally, there is a total sensing budget
constraint that must be satisfied, namely
%
\begin{equation}
\sum_{k=1}^\infty\Gamma^2_k
\leq m ,\label{eqn:budget}
\end{equation}
where $m>0$. It is important to note that we can consider both
deterministic sequential designs or random sequential designs. In the
latter, we allow the choices $A_k$ and $\Gamma_k$ to incorporate
extraneous randomness, which is not explicitly described in the model.
Besides being more general this extra flexibility often facilitates the
analysis. The collection of conditional distributions of $A_k,\Gamma
_k$ given $\{Y_i,A_i,\Gamma_i\}_{i=1}^{k-1}$ for all $k$ is referred
to as the \emph{sensing strategy}, and denoted by $\mathcal{A}$. Note
that, within the sensing model above, we can also consider non-adaptive
sensing frameworks, meaning the choice of sensing actions and precision
allocation must be made before collecting any data. Formally, this
means that $\{A_k,\Gamma_k\}_{k\in\N}$ is statistically independent
from $\{Y_k\}_{k\in\N}$. Note that a non-adaptive design can still be random.

The case $m=n$ is of particular interest and this is often considered
in literature as it allows direct comparison between adaptive and
non-adaptive sensing methodologies. If $m=n$ we allow, on average, one
unit of precision for each one of the $n$ signal entries. Therefore if
we assume the signal $\vec{x}$ belongs a class for which there is no
reason to give a priori preference to any particular signal entry the
optimal non-adaptive sensing strategy amounts to measuring each vector
entry exactly once, with precision one.\footnote{Due to statistical
sufficiency there is no gain in measuring each signal entry more than
once.} This is obviously the classical normal means model.

In the following sections, we consider two different scenarios: signal
detection/testing and signal estimation. In both cases, the extra
flexibility of adaptive sensing is shown to be extremely rewarding. We
characterize the fundamental performance limits of adaptive sensing in
those scenarios and show that these limits can be achieved by practical
inference methodologies.

\section{Signal detection}

In this setting, we are interested in a binary hypothesis testing
problem, where we test a simple null hypothesis against a composite
alternative. In particular, the null hypothesis $H_0$ is simply
$S=\emptyset$, and the alternative hypothesis $H_1$ is $S\in\C$,
where $\C$ is some class of non-empty subsets of $\{1,\ldots,n\}$. We
are particularly interested in the case when under the alternative
$H_1$ all the sets in $\C$ have cardinality $s$, meaning that for all
$S\in\C$ we have $|S|=s$. We will consider only such classes as this
greatly simplifies the presentation and is not, for the most part, a
restrictive condition.

Define
\[
x_{\min}=\min \bigl\{|x_i|\dvtx x_i\neq0 , i\in\{1,
\ldots,n\} \bigr\} .
\]
In the following, we characterize the fundamental signal detection
limits, in particular identifying conditions on $x_{\min}$ as a
function of $\C$ and $n$, such that no procedure is able to reliably
distinguish the two hypotheses. Furthermore, these bounds are
essentially tight, in the sense that there exist practical procedures
matching them. For simplicity, we consider only non-negative signals,
meaning that $x_i\geq0$ for all $i\in\{1,\ldots,n\}$. This greatly
simplifies the analysis, without hindering the generality of the
results. More comments about this are issued in Remark~\ref
{rmk:tightness}. Furthermore, the hardest signals to detect or estimate
are of the form
%
\begin{equation}
\label{eqn:simple_model} x_i=\lleft\{ %
\begin{array} {l@{\qquad}l} \mu& \mbox{if } i\in S,
\\
0 & \mbox{otherwise}. \end{array} %
\rright.
\end{equation}
This means that we can restrict our analysis to signals of the form
above, which are entirely described by the signal support set $S$ and
signal amplitude $\mu$. This is also the class of signals considered
in Addario-Berry \textit{et al.} \cite{addario-berry:10} or in
Donoho and Jin \cite{donoho:04} in a non-adaptive
sensing context.

Let
\[
D=\{Y_i,A_i,\Gamma_i\}_{i\in\N}
,
\]
and let $d=\{y_i,a_i,\gamma_i\}_{i=1}^{\infty}$ be a particular
realization of the experimental procedure. Let $\mathcal{A}$ denote a
particular sensing strategy, and $\hat\Phi(D)\in\{0,1\}$ be an
arbitrary testing function, taking the value $1$ if the null hypothesis
is to be rejected, and zero otherwise. For notational convenience we
write simply $\hat\Phi$ where the hat indicates the dependency on the
data $D$. The \emph{risk} of this procedure is given by
\[
R(\hat\Phi)=\P_\emptyset(\hat\Phi\neq0)+\max_{S\in\C}
\P _S(\hat\Phi\neq1) ,
\]
where $\P_S$ denotes the joint probability distribution of $\{
Y_i,A_i,\Gamma_i\}_{i=1}^{\infty}$ for a given value of $S$. Likewise
we use $\E_S$ to denote expectation under $\P_S$.

Now define
%
\begin{equation}
\label{eqn:minimax} c(\mu,\C) = \inf_{\hat\Phi,\mathcal{A}} R(\hat\Phi)=\inf
_{\hat
\Phi,\mathcal{A}} \Bigl\{\P_\emptyset(\hat\Phi\neq0)+\max
_{S\in\C} \P_S(\hat\Phi\neq1) \Bigr\} .
\end{equation}
Our formal goal is to identify the values of the signal magnitude $\mu
$ for which we have necessarily $c(\mu,\C)\geq\epsilon$ for
$\epsilon>0$.
%
\begin{remark}
The choice of risk above is obviously not the only one possible, and in
the literature other choices of risk have been considered, such as
%
\begin{equation}\label{eqn:max_risk}
\tilde R(\hat\Phi)=\max \Bigl\{\P_\emptyset(\hat\Phi\neq0),\max
_{S\in\C} \P_S(\hat\Phi\neq1) \Bigr\}
,
\end{equation}
or
%
\begin{equation}\label{eqn:bayes_risk}
\bar R(\hat\Phi)=\P_\emptyset(\hat\Phi\neq0)+\frac{1}{N}\sum
_{S\in\C} \P_S(\hat\Phi\neq1)
.
\end{equation}
As discussed in Addario-Berry \textit{et al.}  \cite
{addario-berry:10}, the latter measure of risk
corresponds to the view that, under the alternative hypothesis, a set
$S\in\C$ is selected uniformly at random from $\C$. Clearly
\[
\bar R(\hat\Phi) \leq R(\hat\Phi) \leq2\tilde R(\hat\Phi) \leq2 R(\hat
\Phi) .
\]
If there is sufficient symmetry in $\C$ and $\hat\Phi$ these three
risk measures are essentially identical. Whenever possible we
characterize the fundamental limits of adaptive sensing for each one of
the risk measures, but focus primarily on $R(\hat\Phi)$.
\end{remark}

\subsection{Main results -- Detection}

The class $\C$ of all subsets of $\{1,\ldots,n\}$ with cardinality
$s$ is one of particular interest. This is the class of maximal size,
and obviously the one for which we expect the worst performance for
detection. Perhaps surprisingly, under the adaptive sensing paradigm,
the exact same performance lower bound is obtained for \emph{any}
class $\C$ exhibiting some very mild symmetry. This means that, in
many situations, the structure of the class $\C$ does not really help
under the adaptive sensing scenario. This is in stark contrast with
non-adaptive sensing scenarios, where the structure of the set $\C$
can play a very prominent role, as well documented in Addario-Berry \textit{et al.} \cite{addario-berry:10},
Arias-Castro \textit{et al.} \cite{arias-castro:08}, Butucea and Ingster \cite
{butucea:11}. To state the main result
of this section, we need the following definitions:
%
\begin{defn}[(Symmetric class/full range)]
Let $\Xi=\bigcup_{S\in\C} S$ and $S$ be drawn uniformly at random
from $\C$. If for all $i\in\Xi$ we have $\P(i\in S)=s/|\Xi|$ the
class $\C$ is said to be symmetric. Furthermore, if $|\Xi|=n$ the
class is said to be full range.
\end{defn}

It is remarkable that many classes $\C$ of interest satisfy this mild
symmetry, as for instance, all the classes in Addario-Berry \textit{et al.} \cite{addario-berry:10}.
%
\begin{theorem}\label{thm:detection}
Consider the setting above and let $\C$ be a symmetric class. Let
$\hat\Phi(D)$ be an arbitrary testing procedure, where $D=\{
Y_i,A_i,\Gamma_i\}_{i\in\N}$. Finally, let $0<\epsilon<1$ be
arbitrary. If $R(\hat\Phi)\leq\epsilon$ then necessarily
%
\begin{equation}\label{eqn:detection}
x_{\min} \geq\sqrt{\frac{2|\Xi|}{sm} \log\frac{1}{2\epsilon}}
.
\end{equation}
\end{theorem}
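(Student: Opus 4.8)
The plan is to bound the minimax risk $c(\mu,\C)$ from below by reducing the composite alternative to a simpler ``average'' alternative and then controlling a single Kullback--Leibler divergence. First I would use the standard two-point/Le Cam-type reduction: since $R(\hat\Phi) \ge \bar R(\hat\Phi)$ (the Bayes-style risk with $S$ drawn uniformly from $\C$), it suffices to lower bound $\P_\emptyset(\hat\Phi\neq 0) + \E_{S}[\P_S(\hat\Phi\neq 1)]$ where $S\sim\mathrm{Unif}(\C)$. Writing $\bar\P_1 = \frac{1}{|\C|}\sum_{S\in\C}\P_S$ for the mixture distribution over the data $D$, this is exactly the testing risk between the two simple hypotheses $\P_\emptyset$ and $\bar\P_1$, which by the classical inequality is at least $1 - \mathrm{TV}(\P_\emptyset,\bar\P_1) \ge 1 - \sqrt{\tfrac12 \mathrm{KL}(\P_\emptyset \,\|\, \bar\P_1)}$, or via the convexity bound $\ge \tfrac12\exp(-\mathrm{KL})$ applied appropriately; I would aim for the form giving $c(\mu,\C)\ge \tfrac12 \exp(-\mathrm{KL}(\bar\P_1\|\P_\emptyset))$ or the symmetric variant, since the target bound has $\log\frac{1}{2\epsilon}$ in it, suggesting we want $\mathrm{KL}\le \log\frac{1}{2\epsilon}$ to force risk $>\epsilon$.

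The heart of the argument is computing (or upper bounding) the relevant KL divergence on the sequentially-generated data $D=\{Y_k,A_k,\Gamma_k\}_k$. Here I would use the chain rule for KL divergence over the filtration: because $A_k,\Gamma_k$ are measurable functions of the past under any fixed sensing strategy $\cal A$ (the strategy does not depend on $S$), the divergence decomposes as $\mathrm{KL} = \sum_k \E\big[\mathrm{KL}\big(\mathcal{N}(x_{A_k},\Gamma_k^{-2}) \,\|\, \mathcal{N}(0,\Gamma_k^{-2})\big)\big] = \sum_k \E\big[\tfrac12 \Gamma_k^2 x_{A_k}^2\big]$ for the single-support hypotheses, and by joint convexity of KL one controls the mixture $\bar\P_1$ by the average over $S$ of these. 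For the hardest signal \eqref{eqn:simple_model}, $x_{A_k}^2 = \mu^2 \1\{A_k\in S\}$, so the per-step contribution averaged over $S\sim\mathrm{Unif}(\C)$ is $\tfrac12\mu^2\,\E[\Gamma_k^2\,\P(A_k\in S \mid \text{past})]$. This is where the symmetry hypothesis enters decisively: since $\P(i\in S) = s/|\Xi|$ for every $i\in\Xi$, regardless of which coordinate $A_k$ the strategy adaptively selects, $\P(A_k\in S\mid\text{past}) \le s/|\Xi|$ (it is exactly $s/|\Xi|$ if $A_k\in\Xi$ and $0$ otherwise). Hence each term is at most $\tfrac12 \mu^2 \tfrac{s}{|\Xi|}\E[\Gamma_k^2]$, and summing and invoking the budget constraint \eqref{eqn:budget}, $\sum_k\E[\Gamma_k^2]\le m$, yields $\mathrm{KL} \le \tfrac{s\,m\,\mu^2}{2|\Xi|}$.

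Combining: $c(\mu,\C) \ge \tfrac12\exp\!\big(-\tfrac{sm\mu^2}{2|\Xi|}\big)$, so $R(\hat\Phi)\le\epsilon$ forces $\tfrac12\exp(-\tfrac{sm\mu^2}{2|\Xi|})\le\epsilon$, i.e. $\mu^2 \ge \tfrac{2|\Xi|}{sm}\log\tfrac{1}{2\epsilon}$, which is exactly \eqref{eqn:detection} since $x_{\min}=\mu$. The main obstacle I anticipate is making the KL chain-rule decomposition rigorous over the infinite adaptive horizon: one must carefully set up the likelihood ratio of $\bar\P_1$ versus $\P_\emptyset$ as a martingale, justify the interchange of sum and expectation (e.g. by monotone convergence on partial sums up to time $K$ and then $K\to\infty$, using that the budget is bounded), and handle the fact that $\bar\P_1$ is a mixture rather than a product --- the convexity inequality $\mathrm{KL}(\bar\P_1\|\P_\emptyset)\le \frac{1}{|\C|}\sum_S \mathrm{KL}(\P_S\|\P_\emptyset)$ is the clean way around this, but one should double-check it is compatible with the adaptive structure (it is, because $\P_\emptyset$ is fixed and the sensing strategy is common to all hypotheses). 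A secondary technical point is confirming that randomized sensing strategies add nothing: conditioning on the external randomness reduces to the deterministic-strategy bound, and the bound is uniform in that randomness.
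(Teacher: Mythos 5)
There is a genuine gap, and it sits exactly at the point you flagged as unproblematic: the direction of the Kullback--Leibler divergence, i.e.\ the measure under which the chain-rule expectation is taken. You bound $\mathrm{KL}(\bar\P_1\|\P_\emptyset)\le\frac{1}{|\C|}\sum_{S}\mathrm{KL}(\P_S\|\P_\emptyset)$, and each term $\mathrm{KL}(\P_S\|\P_\emptyset)$ decomposes (by the chain rule) into $\frac{\mu^2}{2}\sum_k\E_S\bigl[\1\{A_k\in S\}\Gamma_k^2\bigr]$ with the expectation taken under $\P_S$, the \emph{alternative}. Under $\P_S$ the past observations carry information about $S$, so the adaptively chosen $A_k$ is correlated with $S$; your step ``$\P(A_k\in S\mid\text{past})\le s/|\Xi|$'' requires $S$ to be independent of the sensing process, which holds under the null but fails under the alternative (or the mixture $\bar\P_1$). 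Concretely, take $s=1$, $\Xi=\{1,\ldots,n\}$: a strategy that spends $m/2$ of the budget uniformly to locate the nonzero coordinate and then spends the remaining $m/2$ on it gives $\sum_k\E_S[\1\{A_k\in S\}\Gamma_k^2]\approx m/2$, not $m/n$, so $\mathrm{KL}(\P_S\|\P_\emptyset)$ can be of order $\mu^2 m/4$ and your claimed bound $s m\mu^2/(2|\Xi|)$ is false in that direction. This is not a technicality about infinite horizons or martingales; it is the central difficulty of adaptive sensing lower bounds.

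The fix is to reverse the divergence: since the total variation is symmetric, the inequality $1-\mathrm{TV}\ge\frac12\exp(-\mathrm{KL})$ may be used with $\mathrm{KL}(\P_\emptyset\|\bar\P_1)$, and convexity of $\mathrm{KL}$ in its \emph{second} argument gives $\mathrm{KL}(\P_\emptyset\|\bar\P_1)\le\frac{1}{|\C|}\sum_S\mathrm{KL}(\P_\emptyset\|\P_S)$, where now every expectation is under $\P_\emptyset$. Under the null the data distribution does not depend on $S$, so the precision allocation $b_i=\sum_k\E_\emptyset[\1\{A_k=i\}\Gamma_k^2]$ is a single $S$-independent vector with $\sum_i b_i\le m$, and the symmetry of $\C$ gives $\frac{1}{|\C|}\sum_S\sum_{i\in S}b_i=\frac{s}{|\Xi|}\sum_{i\in\Xi}b_i\le ms/|\Xi|$, which yields \eqref{eqn:detection}. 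With that correction your mixture/Bayes-risk route is valid and is essentially the argument the paper uses for the risk \eqref{eqn:bayes_risk} in Proposition~\ref{prop:detection_other_risks}; the paper's proof of Theorem~\ref{thm:detection} itself works per $S$ with $\mathrm{KL}(\P_\emptyset\|\P_S)$, a minimum over $\C$, and the deterministic optimization of Lemma~\ref{lemma:optimization}, rather than through the mixture.
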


This result gives a condition on the minimal signal magnitude necessary
to ensure the detection risk is not too large. Perhaps surprisingly the
lower bound does not include any factor involving specific structural
properties of $\C$, but only the range and cardinality of the
corresponding sets. A possible way to understand this comes from the
following observation: for detection, it suffices to identify a \emph
{single} element of $S$, and there is no need to identify all the
elements. Therefore, cues provided by the structure are not very
informative. In addition, note that the above theorem also applies to
non-symmetric classes provided they contain a symmetric class. Before
proving this result, it is interesting to present a simple corollary
for the case of full range classes, emphasizing the asymptotic behavior.
%
\begin{coro}\label{coro:detection}
Let $\C$ be a symmetric and full range class of sets with cardinality
$s$, where $s$ can be a function of $n$ (this dependence is not
explicitly stated). Let $\hat\Phi_n$ be an arbitrary adaptive sensing
testing procedure. If
\[
\lim_{n\rightarrow\infty} R(\hat\Phi_n)=0
\]
then necessarily
\[
x_{\min} \geq\omega_n \sqrt{\frac{n}{sm}} ,
\]
where $\omega_n$ is a sequence for which $\lim_{n\rightarrow\infty}
\omega_n=\infty$.
\end{coro}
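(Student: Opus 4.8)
The plan is to derive the corollary directly from Theorem~\ref{thm:detection} by contraposition. Suppose, for the sake of contradiction, that $R(\hat\Phi_n)\to 0$ but the claimed lower bound fails, i.e.\ there is no sequence $\omega_n\to\infty$ with $x_{\min}\geq\omega_n\sqrt{n/(sm)}$. Failing to admit such a sequence is equivalent to saying that $x_{\min}\sqrt{sm/n}$ does not tend to infinity, hence has a bounded subsequence: there is a constant $C<\infty$ and a subsequence $n_j\to\infty$ along which $x_{\min}\sqrt{sm/n_j}\leq C$, that is $x_{\min}\leq C\sqrt{n_j/(sm)}$.

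The key step is then to choose $\epsilon$ appropriately in Theorem~\ref{thm:detection}. Since $\C$ is full range we have $|\Xi|=n$, so the theorem says that whenever $R(\hat\Phi_{n})\leq\epsilon$ we must have $x_{\min}\geq\sqrt{(2n/(sm))\log(1/(2\epsilon))}$. Pick $\epsilon=\epsilon_j$ small enough that $\sqrt{2\log(1/(2\epsilon_j))}>C$; concretely any fixed $\epsilon<\tfrac12 e^{-C^2/2}$ works, so one need not even let $\epsilon$ vary with $j$. Because $R(\hat\Phi_{n_j})\to 0$ along the subsequence, for $j$ large we have $R(\hat\Phi_{n_j})\leq\epsilon$, and Theorem~\ref{thm:detection} forces $x_{\min}\geq\sqrt{(2n_j/(sm))\log(1/(2\epsilon))}=\sqrt{2\log(1/(2\epsilon))}\cdot\sqrt{n_j/(sm)}>C\sqrt{n_j/(sm)}$, contradicting $x_{\min}\leq C\sqrt{n_j/(sm)}$ on that same subsequence. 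This contradiction establishes that $x_{\min}\sqrt{sm/n}\to\infty$, and then one may simply set $\omega_n=x_{\min}\sqrt{sm/n}$ (or, if a universal sequence independent of the signal is wanted, any $\omega_n$ diverging slower, e.g.\ $\omega_n=\min\{x_{\min}\sqrt{sm/n},\log n\}$) to obtain the stated form $x_{\min}\geq\omega_n\sqrt{n/(sm)}$ with $\omega_n\to\infty$.

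I do not anticipate a genuine obstacle here, since the corollary is essentially an asymptotic restatement of the theorem. The only point requiring a little care is the logical manipulation of ``there exists $\omega_n\to\infty$ with $\dots$'': one should phrase the negation correctly as the existence of a bounded subsequence of $x_{\min}\sqrt{sm/n}$, and then it is cleanest to pass to that subsequence, apply the theorem with a fixed small $\epsilon$ tied to the bound $C$, and read off the contradiction. A minor cosmetic issue is whether $\omega_n$ should be allowed to depend on the (unknown) signal through $x_{\min}$; the formulation in the statement does not forbid this, but if a signal-independent sequence is preferred the truncation trick above supplies one, since any sequence tending to infinity can be slowed down to diverge below a prescribed divergent quantity.
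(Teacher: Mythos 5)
Your argument is correct and is exactly the intended derivation: the paper states Corollary~\ref{coro:detection} as an immediate consequence of Theorem~\ref{thm:detection} (with $|\Xi|=n$ for a full range class), obtained by noting that $R(\hat\Phi_n)\to 0$ lets one apply the theorem with arbitrarily small $\epsilon$, so the constant $\sqrt{2\log(1/(2\epsilon))}$ can be pushed past any fixed bound, forcing $x_{\min}\sqrt{sm/n}\to\infty$. Your contrapositive/subsequence phrasing and the choice $\omega_n=x_{\min}\sqrt{sm/n}$ are just a careful spelling-out of this same step, so there is nothing to add.
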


This corollary gives a necessary condition for detection consistency.
As shown in Proposition~\ref{prop:MDS}, this bound is actually tight,
meaning there are adaptive sensing procedures that can detect signals
satisfying the above condition. The case $m=n$ is particularly
interesting, as it allows the comparison between adaptive and
non-adaptive sensing performance. For that case, adaptive sensing
detection is possible if $x_{\min} = \omega_n \sqrt{{1}/{s}}$.
Since $\omega_n$ can diverge arbitrarily slowly we see that the
extrinsic signal dimension $n$ plays no significant role in this bound,
and only the intrinsic dimension $s$ is relevant. Keep in mind,
however, that $\omega_n$ is related to the rate of convergence of the
risk $R(\hat\Phi_n)$ to zero. Corollary~\ref{coro:detection} is in
stark contrast to what is known for the same problem if one restricts
to the classical setting of non-adaptive sensing, as in Ingster \cite{ingster:97},
Ingster and Suslina \cite{ingster:03}, Donoho and Jin \cite{donoho:04}, Donoho \cite
{donoho:06}. For instance, for the
class of all subsets with cardinality $s$ it is necessary to have
$x_{\min} \geq c\sqrt{\log n}$ if $s=\mathrm{o}(\sqrt{n})$, where the factor
$c>0$ depends on the specific relation between $s$ and $n$. In
Meinshausen and Rice \cite{meinshausen:06}, the authors
considered estimation of the proportion
of significant components $|S|/n$. Their setting is more general, as
the distributions corresponding to significant and insignificant signal
component observations can be non-normal. Their approach can be used to
test the hypothesis $|S|=0$. For the Gaussian case, they recover
essentially the $\sqrt{\log n}$ scaling. Finally, in Cai, Jin and Low \cite{cai:07}
the authors consider again the estimation of the fraction of
significant signal components in the normal means case, and show
results beyond consistency, including minimax rates of convergence of
the risk. We now proceed with the proof of the theorem and a discussion
about tightness of the bounds.
\begin{pf*}{Proof of Theorem~\ref{thm:detection}}
The proof of this lower bound hinges, as usual, on the analysis of
likelihood ratios. Begin by defining the joint probability density
function of $\{Y_k,A_k,\Gamma_k\}_{k=1}^\infty$ under $S$, which we
denote by
\[
f(d;S)=f(y_1,a_1,\gamma_1,y_2,a_2,
\gamma_2,\ldots;S) .
\]
Note that this is properly defined for a certain dominating measure
(mixed continuous and discrete). Taking into account the conditional
dependences in our observation model we can factorize this probability
density function as follows
\begin{eqnarray*}
f(d;S) &=& f_{A_1,\Gamma_1}(a_1,\gamma_1) \times
f_{Y_1|A_1,\Gamma
_1}(y_1|a_1,\gamma_1;S)
\\
&&{} \times f_{A_2,\Gamma_2|Y_1,A_1,\Gamma_1}(a_2,\gamma _2|y_1,a_1,
\gamma_1)\times f_{Y_2|A_2,\Gamma_2}(y_2|a_2,
\gamma_2;S) \times\cdots.
\end{eqnarray*}
Note that in this factorization only some terms involve the underlying
true set $S$, while all the other terms depend solely on the sensing
strategy used. This greatly simplifies the computation of likelihood
ratios, as all the terms not involving $S$ cancel out. In particular,
the likelihood ratio between two hypotheses is given simply by
%
\begin{eqnarray}\label{eqn:LR}
\LR_{S,S'}(d) &=& \frac{f(d;S)}{f(d;S')}
\\
&=& \prod_{k=1}^\infty\frac{f_{Y_k|A_k,\Gamma_k}(y_k|a_k,\gamma
_k;S)}{f_{Y_k|A_k,\Gamma_k}(y_k|a_k,\gamma_k;S')}
.
\end{eqnarray}
As usual, in order to effectively distinguish if the underlying true
distribution is parameterized by $S$ or $S'$ the corresponding
likelihood ratio needs to be significantly different than 1. We proceed
by formally stating this. Our analysis is heavily inspired by the
approach in Chernoff \cite{chernoff:59}.

The first step is to relate the probabilities of type I and type II
errors to the likelihood ratio, namely giving a relation between $\P
_S(\hat\Phi\neq1)$ and $\P_\emptyset(\hat\Phi\neq\emptyset)$
where $S$ is an arbitrary element of $\C$.
Begin by defining the total variation and the Kullback--Leibler
divergence between two probability measures.
%
\begin{defn}
Let $\P_0$ and $\P_1$ be two probability measures defined on a common
measurable space $(\Omega,\mathcal{B})$. The total variation distance
is defined as
\[
\TV(\P_0,\P_1) = \sup_{B\in\mathcal{B}} \bigl|
\P_0(B)-\P_1(B)\bigr| .
\]
The Kullback--Leibler divergence is defined as
\[
\KL(\P_0\|\P_1)=\lleft\{ %
\begin{array} {l@{\qquad}l}
\displaystyle \int_\Omega\log\displaystyle \frac{\mathrm{d}\P_0}{\mathrm{d}\P_1} \,\mathrm{d}\P_0 & \mbox{if } \P_0 \ll\P_1 ,
\\
+\infty& \mbox{otherwise}. \end{array} %
\rright.
\]
\end{defn}

The total variation is a proper distance, unlike the Kullback--Leibler
divergence. Both are always non-negative but the latter is not
symmetric. If $f_0$ and $f_1$ are densities with respect to a measure
dominating both $\P_0$ and $\P_1$ the Kullback--Leibler divergence can
simply be written as
\[
\KL(\P_0\|\P_1)=\E_0 \biggl[\log
\frac{f_0(X)}{f_1(X)} \biggr] ,
\]
where $X$ is a random variable with distribution given by $\P_0$.
Therefore, this is the expected value of a log-likelihood ratio.
Consider now the setting in this paper. As done in Tsybakov \cite{tsybakov:09},
the total variation is closely related to the infimum of the sum of
type I and type II error probability, namely, for any binary (test)
function $\hat\Phi$ we have
\[
\P_\emptyset(\hat\Phi\neq0)+\P_S(\hat\Phi\neq1)\geq1-
\TV(\P _\emptyset,\P_S) .
\]
Evaluating the total variation distance is generally difficult, but
using Lemma~2.6 of Tsybakov \cite{tsybakov:09} we can
relate it to the
Kullback--Leibler divergence, which is generally much easier to
evaluate. Namely
\[
1-\TV(\P_\emptyset,\P_S)\geq\tfrac{1}{2}\exp\bigl(-\KL(
\P_\emptyset \|\P_S)\bigr) .
\]
Putting these two results together we obtain a simple relation between
the Kullback--Leibler divergence and the probabilities of error,
%
\begin{equation}\label{eqn:keylowerbound}
\KL(\P_\emptyset\|\P_S)\geq-\log \bigl(2\P_\emptyset(
\hat\Phi \neq\emptyset)+2\P_S(\hat\Phi\neq1) \bigr)
.
\end{equation}

To simplify the notation, let $\LR_{S,S'}\equiv\LR_{S,S'}(D)$. From
equation \eqref{eqn:keylowerbound} we conclude that
\[
\E_\emptyset[\log\LR_{\emptyset,S}]=\KL(\P_\emptyset\|
\P_S) \geq-\log \bigl(2\P_\emptyset(\hat\Phi\neq0)+2
\P_S(\hat\Phi \neq1) \bigr) .
\]
Since the choice of set $S$ was completely arbitrary, we have the bound
%
\begin{eqnarray}\label{eqn:logliklowerbound}
\min_{S\in\C} \E_\emptyset[\log\LR_{\emptyset,S}] \geq\min
_{S\in\C} \bigl\{-\log \bigl(2\P_\emptyset(\hat\Phi
\neq0)+2\P _S(\hat\Phi\neq1) \bigr) \bigr\} .
\end{eqnarray}

At this point it is important to note that, if we desire to have
$R(\hat\Phi)\leq\epsilon$ for some $0<\epsilon<1$ then $\P
_\emptyset(\hat\Phi\neq0)+\P_S(\hat\Phi\neq1)\leq\epsilon$
(for any $S\in\C$), and therefore
%
\begin{equation}\label{eqn:lower_LR_bound}
\min_{S\in\C} \E_\emptyset[\log\LR_{\emptyset,S}] \geq\log
\biggl(\frac{1}{2\epsilon} \biggr) .
\end{equation}
The next step of the proof entails deriving a good upper bound on $\min_{S\in\C} \E_\emptyset[\log\LR_{\emptyset,S}]$ and comparing it
to the lower bound just shown.

As noted before, the expected likelihood ratio is actually the
Kullback--Leibler divergence between $\P_\emptyset$ and $\P_S$. This
obviously depends on the sensing strategy $\mathcal{A}$ that is used.
Therefore, we need to get an upper bound on
%
\begin{equation}\label{eqn:maxmin}
\sup_{\mathcal{A}} \min_{S\in\C} \E_\emptyset[
\log\LR _{\emptyset,S}] .
\end{equation}
It is instructive to compare the above expression with the one of the
minimax error \eqref{eqn:minimax}. Note that the roles of the max/sup
and min/inf are reversed. This should not come as a surprise as larger
values of $\E_\emptyset[\log\LR_{\emptyset,S}]$ correspond to
lower probabilities of error. Note also that $\E_\emptyset[\log\LR
_{\emptyset,S}]$ can be interpreted as the payoff matrix of a game
where the sensing strategy makes the first move, and nature is the
opponent that chooses a sparsity pattern in an adversarial way. Now
note that
\begin{eqnarray*}
\E_\emptyset[\log\LR_{\emptyset,S}] &=& \sum
_{k=1}^\infty\E _\emptyset \biggl[\log
\frac{f_{Y_k|A_k,\Gamma_k}(Y_k|A_k,\Gamma
_k;\emptyset)}{f_{Y_k|A_k,\Gamma_k}(Y_k|A_k,\Gamma_k;S)} \biggr]
\\
&=& \sum_{k=1}^\infty\E_\emptyset\biggl[\E_\emptyset \biggl[\log\frac{f_{Y_k|A_k,\Gamma_k}(Y_k|A_k,\Gamma_k;\emptyset
)}{f_{Y_k|A_k,\Gamma_k}(Y_k|A_k,\Gamma_k;S)}\Bigl\vert
A_k,\Gamma _k \biggr] \biggr]
\\
&=& \sum_{k=1}^\infty\E_\emptyset
\biggl[\frac{\mu^2 \1\{A_k\in S\}
}{2} \Gamma^2_k \biggr]
\\
&=& \frac{\mu^2}{2} \sum_{k=1}^\infty
\E_\emptyset \bigl[\1\{ A_k\in S\} \Gamma^2_k
\bigr] ,
\end{eqnarray*}
where the final steps rely simply on the Kullback--Leibler divergence
between normal random variables with the same variance and different
means. At this point, we need to evaluate
\[
\sup_{\mathcal{A}} \min_{S\in\C} \Biggl\{
\frac{\mu^2}{2} \sum_{k=1}^\infty
\E_\emptyset \bigl[\1\{A_k\in S\} \Gamma^2_k
\bigr] \Biggr\} .
\]
We need to solve the above optimization problem over the space of all
possible sensing strategies. Although this might seem rather involved,
this optimization can be reduced to a much simpler deterministic
optimization problem. Begin by defining
%
\begin{equation}\label{eqn:b}
b_i=\sum_{k=1}^\infty
\E_\emptyset\bigl[\1\{A_k=i\} \Gamma^2_k
\bigr] .
\end{equation}
Note that this definition does not depend on $S$, as the expectation is
taken under the null hypothesis. Furthermore $b_i\geq0$, and the
sensing budget equation in the observation model \eqref{eqn:budget}
can be written as $\sum_{i=1}^n b_i\leq m$. Therefore,
\begin{eqnarray*}
&&\sup_{\mathcal{A}} \min_{S\in\C} \Biggl\{
\frac{\mu^2}{2} \sum_{k=1}^\infty
\E_\emptyset \bigl[\1\{A_k\in S\} \Gamma^2_k
\bigr] \Biggr\}
\\
& &\quad= \frac{\mu^2}{2} \sup_{\mathcal{A}} \min_{S\in\C}
\Biggl\{ \sum_{k=1}^\infty\sum
_{i\in S} \E_\emptyset \bigl[\1\{A_k=i\}
\Gamma^2_k \bigr] \Biggr\}
\\
&&\quad = \frac{\mu^2}{2} \sup_{\mathcal{A}} \min_{S\in\C}
\Biggl\{ \sum_{i\in S} \sum
_{k=1}^\infty\E_\emptyset \bigl[\1
\{A_k=i\} \Gamma^2_k \bigr] \Biggr\}
\\
&&\quad = \frac{\mu^2}{2} \sup_{\vec{b}\in\R_0^+: \sum_{i=1}^n b_i\leq
m} \min_{S\in\C}
\sum_{i\in S} b_i .
\end{eqnarray*}
We have now a relatively simple finite dimensional problem, where we
seek to identify the vector $\vec{b}=(b_1,\ldots,b_n)$) maximizing a
concave function. The solution of this problem obviously depends on the
exact structure of $\C$. Remarkably, for symmetric classes, the
solution is extremely simple and characterized in the first part of the
following lemma, proved in the Appendix.
%
\begin{lemma}\label{lemma:optimization}
Let $\C$ be a symmetric class. Let $\Xi=\bigcup_{S\in\C} S$. Then
\begin{enumerate}[2.]
\item[1.]
\[
\sup_{\vec{b}\in\R_0^+: \sum_{i=1}^n b_i=m} \min_{S\in\C} \sum
_{i\in S} b_i =\frac{ms}{|\Xi|} ,
\]
\item[2.]
\[
\sup_{\vec{b}\in\R_0^+: \sum_{i=1}^n b_i=m} \frac{1}{|\C|} \sum
_{S\in\C} \sum_{i\in S} b_i
=\frac{ms}{|\Xi|} ,
\]
\end{enumerate}
and in both cases the solution is attained taking $b_i=m/|\Xi|$ for
$i\in\Xi$ and zero otherwise.
\end{lemma}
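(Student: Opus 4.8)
The plan is to prove both statements at once by combining a single averaging argument (which gives the upper bounds) with one explicit feasible allocation (which attains them), using nothing about $\C$ beyond its symmetry. First I would fix an arbitrary feasible vector $\vec{b}\in\R_0^+$ with $\sum_{i=1}^n b_i=m$ and average the quantity $\sum_{i\in S}b_i$ over a uniformly random $S\in\C$. Exchanging the two finite sums and using that, for $i\notin\Xi$, no member of $\C$ contains $i$, while for $i\in\Xi$ the symmetry hypothesis gives $\frac{1}{|\C|}\sum_{S\in\C}\1\{i\in S\}=\P(i\in S)=s/|\Xi|$, we obtain
$$\frac{1}{|\C|}\sum_{S\in\C}\sum_{i\in S} b_i \;=\; \sum_{i=1}^n b_i\Big(\frac{1}{|\C|}\sum_{S\in\C}\1\{i\in S\}\Big) \;=\; \frac{s}{|\Xi|}\sum_{i\in\Xi} b_i \;\le\; \frac{s}{|\Xi|}\sum_{i=1}^n b_i \;=\; \frac{ms}{|\Xi|}\ .$$
This is precisely the upper bound in statement~2. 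For statement~1, I would note that the minimum over $S\in\C$ of a finite family of numbers never exceeds its arithmetic mean, so the same chain yields $\min_{S\in\C}\sum_{i\in S}b_i\le ms/|\Xi|$ for every feasible $\vec b$, hence $\sup_{\vec b}\min_{S\in\C}\sum_{i\in S}b_i\le ms/|\Xi|$.

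It then remains to exhibit a feasible $\vec b$ attaining this common value, and the candidate is the obvious one: set $b_i=m/|\Xi|$ for $i\in\Xi$ and $b_i=0$ otherwise. This is nonnegative and sums to $|\Xi|\cdot m/|\Xi|=m$, so it is feasible. Since every $S\in\C$ satisfies $|S|=s$ and $S\subseteq\Xi$, we have $\sum_{i\in S}b_i=s\cdot m/|\Xi|=ms/|\Xi|$ for each such $S$; consequently both $\min_{S\in\C}\sum_{i\in S}b_i$ and $\frac{1}{|\C|}\sum_{S\in\C}\sum_{i\in S}b_i$ equal $ms/|\Xi|$ for this allocation. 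Combining with the upper bounds from the previous paragraph gives equality in both statements and identifies $b_i=m/|\Xi|\,\1\{i\in\Xi\}$ as an optimizer, as claimed.

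I do not expect a serious obstacle here; the argument is essentially the elementary inequality ``minimum $\le$ average''. The only points requiring a little care are that the feasible point genuinely uses the equality constraint $\sum_i b_i=m$ (not merely $\le m$), and that the symmetry hypothesis is exactly what converts the average $\sum_i b_i\,\P(i\in S)$ into the budget-proportional expression $(s/|\Xi|)\sum_{i\in\Xi}b_i$ — without symmetry the worst-case set in statement~1 could carry far less than the average mass, and the value would drop. For completeness I would also remark that the optimizer is not unique: in statement~2 any $\vec b$ supported on $\Xi$ with total mass $m$ is optimal, while in statement~1 the uniform allocation on $\Xi$ is the natural equalizing choice that makes $\sum_{i\in S}b_i$ constant over $S\in\C$.
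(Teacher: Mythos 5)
Your proof is correct and rests on the same mechanism as the paper's: averaging $\sum_{i\in S}b_i$ over a uniformly random $S\in\C$ and using the symmetry hypothesis $\P(i\in S)=s/|\Xi|$ for $i\in\Xi$ (with the $i\notin\Xi$ terms vanishing), plus the explicit uniform allocation on $\Xi$ for attainment. The only difference is presentational: you obtain the upper bound directly via ``minimum $\le$ average,'' covering both parts at once, whereas the paper proves part~1 by contradiction, constructing two identically distributed random variables $J$ and $K$ whose expectations would have to disagree if a strictly better allocation existed.
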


We are now in place to prove the theorem: by putting together the
likelihood ratio lower bound \eqref{eqn:lower_LR_bound} and the above
upper bound we get
\[
\frac{\mu^2 m s}{2|\Xi|}\geq\log\frac{1}{2\epsilon} ,
\]
which is equivalent to
\[
\mu\geq\sqrt{\frac{2 |\Xi|}{s m} \log\frac{1}{2\epsilon}}
\]
concluding the proof.
\end{pf*}

Lower bounds for adaptive sensing in settings other than the one in
this paper have been derived previously. For instance, in Castro and Nowak \cite{castro:08} a minimax characterization of the
fundamental performance
limits of active learning for a binary classification problem was
provided. Such results were made possible by bringing together
approximation results for smooth functional spaces and classical
minimax bounding techniques (as in Tsybakov \cite
{tsybakov:09}), modified to
incorporate the sequential experimental design aspect of the problem.
In that approach the functional approximation results played the
prominent role, and the stochastic part of the error had a much smaller
contribution. Unfortunately this is not the case for the setting
considered in the current paper and previously existing approaches were
not adequate, prompting the novel approach presented here.

The proof of this theorem can be adapted for the other two risk
definitions \eqref{eqn:max_risk} and \eqref{eqn:bayes_risk}, and we
can show that the risk behavior is qualitatively the same. These
results are stated in the following proposition, proved in the \hyperref[app]{Appendix}.
%
\begin{prop}\label{prop:detection_other_risks}
Consider the setting of Theorem~\ref{thm:detection} and let
$0<\epsilon<1$. If $\tilde R(\hat\Phi)\leq\epsilon/2$\vspace*{2pt} or $\bar
R(\hat\Phi)\leq\epsilon$ then the conclusion of Theorem~\ref
{thm:detection} is still valid and the lower bound \eqref
{eqn:detection} holds.
\end{prop}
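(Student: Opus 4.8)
The plan is to re-run the two pillars of the proof of Theorem~\ref{thm:detection} --- the lower bound on $\E_\emptyset[\log\LR_{\emptyset,S}]$ coming from \eqref{eqn:keylowerbound}, and the matching upper bound supplied by Lemma~\ref{lemma:optimization} --- and check that each survives the change of risk functional. The case $\tilde R(\hat\Phi)\le\epsilon/2$ is essentially immediate: by the elementary inequality $R(\hat\Phi)\le 2\tilde R(\hat\Phi)$ noted after the definition of $\bar R$, the hypothesis gives $R(\hat\Phi)\le\epsilon$ and Theorem~\ref{thm:detection} applies verbatim. (One could equally note that $\tilde R(\hat\Phi)\le\epsilon/2$ forces both $\P_\emptyset(\hat\Phi\neq 0)\le\epsilon/2$ and $\P_S(\hat\Phi\neq 1)\le\epsilon/2$ for every $S\in\C$, so that $\P_\emptyset(\hat\Phi\neq 0)+\P_S(\hat\Phi\neq 1)\le\epsilon$, which is exactly the hypothesis used in passing from \eqref{eqn:keylowerbound} to \eqref{eqn:lower_LR_bound}; the rest of the argument is unchanged.)

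The case $\bar R(\hat\Phi)\le\epsilon$ requires reworking the argument with \emph{averages} over $\C$ replacing \emph{minima}, because controlling the mean type~II error does not control its maximum. Writing $N=|\C|$, I would average \eqref{eqn:keylowerbound} over $S$ drawn uniformly from $\C$ and apply Jensen's inequality to the convex function $t\mapsto -\log t$:
\begin{align*}
\frac{1}{N}\sum_{S\in\C}\E_\emptyset[\log\LR_{\emptyset,S}]
&\ \ge\ \frac{1}{N}\sum_{S\in\C}\Bigl(-\log\bigl(2\P_\emptyset(\hat\Phi\neq 0)+2\P_S(\hat\Phi\neq 1)\bigr)\Bigr)\\
&\ \ge\ -\log\Bigl(2\P_\emptyset(\hat\Phi\neq 0)+\tfrac{2}{N}\sum_{S\in\C}\P_S(\hat\Phi\neq 1)\Bigr)\ =\ -\log\bigl(2\bar R(\hat\Phi)\bigr)\ \ge\ \log\tfrac{1}{2\epsilon}\,,
\end{align*}
which is the exact analogue of \eqref{eqn:lower_LR_bound} with the average replacing the minimum. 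For the upper bound, the computation in the proof of Theorem~\ref{thm:detection} gives $\E_\emptyset[\log\LR_{\emptyset,S}]=\tfrac{\mu^2}{2}\sum_{i\in S}b_i$ with $b_i$ as in \eqref{eqn:b}, so for any sensing strategy $\frac{1}{N}\sum_{S\in\C}\E_\emptyset[\log\LR_{\emptyset,S}]=\tfrac{\mu^2}{2}\cdot\frac{1}{|\C|}\sum_{S\in\C}\sum_{i\in S}b_i$; since $b_i\ge 0$ and $\sum_{i=1}^n b_i\le m$, part~2 of Lemma~\ref{lemma:optimization} bounds this by $\tfrac{\mu^2 m s}{2|\Xi|}$. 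Chaining the two bounds yields $\tfrac{\mu^2 m s}{2|\Xi|}\ge\log\tfrac{1}{2\epsilon}$, i.e. \eqref{eqn:detection}, since $x_{\min}=\mu$ for signals of the form \eqref{eqn:simple_model}.

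The only genuinely new step, and the one I would double-check, is the averaging: one must apply Jensen's inequality in the correct direction (valid since $-\log$ is convex, so the average of $-\log$ dominates $-\log$ of the average) and, crucially, invoke \emph{part~2} rather than part~1 of Lemma~\ref{lemma:optimization} for the matching upper bound --- this is legitimate precisely because the vector $\vec b=(b_1,\dots,b_n)$ is an expectation under $\P_\emptyset$ and hence does not depend on $S$, so the per-strategy upper bound reduces exactly to the deterministic optimization problem solved there. Everything else is a line-by-line transcription of the proof of Theorem~\ref{thm:detection}.
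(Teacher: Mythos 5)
Your proposal is correct and follows essentially the same route as the paper: the $\tilde R$ case via $R\le 2\tilde R$, and the $\bar R$ case by averaging \eqref{eqn:keylowerbound} over $S\in\C$ and invoking part~2 of Lemma~\ref{lemma:optimization}. The only difference is that you dispose of the constrained minimization with Jensen's inequality applied to the convex map $t\mapsto -\log t$, where the paper uses a Lagrange-multiplier argument over the set $\mathcal{P}$; both give the same bound $\log\frac{1}{2\epsilon}$, and your version is, if anything, slightly cleaner.
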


\subsection{Tightness of the detection lower bounds}\label
{sec:tightness_detection}

We now proceed to show that the lower bounds derived above are indeed
tight, in the sense that there are adaptive sensing testing procedures
which are able to nearly attain them. As we saw, for symmetric classes
$\C$, extra class structure does not help. Therefore, we focus
exclusively on the largest class of all the subsets of $\{1,\ldots,n\}
$ with cardinality $s$. In Haupt, Castro and Nowak \cite{haupt:10}, a
procedure called
Distilled Sensing (DS) was introduced, and the authors proved that for
the detection problem described above this procedure is able to
asymptotically drive the risk to zero when $\mu>4\sqrt{n/m}$ and
$\log\log\log n<s<n^{1-\beta}$ for some $\beta\in(0,1)$. When
comparing this result to the above lower bound we see that there is a
huge gap, as we would expect the signal magnitude $\mu$ to scale
essentially like $\sqrt{2n/(sm)}$. However, it is important to note
that DS is entirely agnostic about the sparsity level and possible
signal magnitude. An alternative non-agnostic methodology can be
derived using DS as a black-box, which nearly achieves the lower-bounds
of the previous section.

We begin by formally stating the performance results for the DS
procedure. The following proposition is essentially the second part of
Theorem III.1 in Haupt, Castro and Nowak \cite{haupt:10}.
%
\begin{prop}[(From Haupt, Castro and Nowak \cite{haupt:10}\footnote{The
sparsity lower bound condition $\log\log\log n < s$ is not stated in the theorem in
Haupt, Castro and Nowak \cite{haupt:10} for presentation reasons, and
the discussion on the validity of the result for $\log\log\log n < s$ appears only on the last
paragraph of Section VI.})]
\label{prop:DS}
Assume $\log\log\log n< s\leq n^{1-\beta}$, for some $\beta\in
(0,1)$.\vspace*{2pt} Furthermore let $\mu>4\sqrt{n/m}$. There is a sensing
strategy $\mathcal{A}_\DS$ and a test function $\hat\Phi_\DS$ such that
\[
R(\hat\Phi_\DS)\rightarrow0 ,
\]
as $n\rightarrow\infty$.
\end{prop}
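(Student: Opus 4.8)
The plan is to exhibit $({\cal A}_\text{DS},\hat\Phi_\text{DS})$ through the Distilled Sensing construction of \cite{haupt:10} and to bound separately the two components $\P_\emptyset(\hat\Phi_\text{DS}\neq 0)$ and $\max_{S\in\C}\P_S(\hat\Phi_\text{DS}\neq 1)$ of the risk. The sensing strategy proceeds in $T\approx\log_2(n/s)$ \emph{distillation} stages: maintain an active set ${\cal I}_t\subseteq\{1,\ldots,n\}$ with ${\cal I}_1=\{1,\ldots,n\}$, and in stage $t$ measure every $i\in{\cal I}_t$ exactly once with common precision $\gamma_t^2=m_t/|{\cal I}_t|$, where $\{m_t\}$ is a deterministic schedule with $\sum_t m_t\leq m$, then set ${\cal I}_{t+1}=\{i\in{\cal I}_t:Y_i^{(t)}>0\}$. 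The schedule is the crux: under $H_0$ thresholding at $0$ retains each coordinate independently with probability exactly $1/2$, so $|{\cal I}_t|$ concentrates around $n2^{-(t-1)}$ and hence $\gamma_t^2\approx m_t 2^{t-1}/n$; choosing the per-stage allocation to decay geometrically (roughly $m_t\propto t\,2^{-t}$, so that $\sum_t m_t$ is a bounded fraction of $m$) makes $\gamma_t^2$ grow about linearly in $t$, which is what permits $T\to\infty$ stages within a fixed budget. The test $\hat\Phi_\text{DS}$ then rejects $H_0$ when $|{\cal I}_{T+1}|$ exceeds a threshold $\theta$ placed strictly between the expected null and expected alternative cardinalities of the final active set (a variant instead reserves a small constant fraction of the budget to re-measure ${\cal I}_{T+1}$, which has size $O(s)$, with precision of order $m/s$ and rejects on a large observation; this succeeds because $s\leq n^{1-\beta}$ makes the per-coordinate signal-to-noise ratio of order $n/s$ far exceed the requisite logarithmic threshold).

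For the false-alarm probability, under $H_0$ every $Y_i^{(t)}$ is zero-mean Gaussian, so conditionally on ${\cal I}_t$ the coordinates are retained independently with probability $1/2$; a Chernoff bound and induction over the $T$ stages give $|{\cal I}_{T+1}|=(1+o(1))\,n2^{-T}$, of order $s$, with probability $1-o(1)$. For the miss probability, fix $S\in\C$: a signal coordinate $i\in S$ has $Y_i^{(t)}\sim N(\mu,\gamma_t^{-2})$ and hence survives stage $t$ with probability $\Phi(\mu\gamma_t)$. Here the hypothesis $\mu>4\sqrt{n/m}$ enters: it guarantees that $\sum_{t\geq 1}\bigl(1-\Phi(\mu\gamma_t)\bigr)$ is bounded by a constant strictly below $1$ for the schedule above, because with $\gamma_t^2$ growing linearly one has $1-\Phi(\mu\gamma_t)\leq\exp(-\mu^2\gamma_t^2/2)$ decaying geometrically in $t$, and a short computation shows the resulting series can be driven down to a small constant precisely because $\mu^2\geq 16\,n/m$ supplies the slack needed to afford a precision schedule with $\sum_t\gamma_t^2|{\cal I}_t|\leq m$. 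Consequently a given signal coordinate survives all $T$ stages with probability at least a fixed $\rho\in(0,1)$, and — since, conditionally on the trajectory of the null coordinates (itself concentrated), the $s$ signal-survival events are essentially independent — the number of surviving signal coordinates concentrates around a mean that is at least $(1-o(1))\rho s$. Hence with probability $1-o(1)$ one has $|{\cal I}_{T+1}|\geq(1+\rho/2)\,n2^{-T}$ under $H_1$, against $|{\cal I}_{T+1}|\leq(1+o(1))\,n2^{-T}$ under $H_0$; since $n2^{-T}$ is of order $s\to\infty$, the gap $(\rho/2)n2^{-T}$ dominates the $O(\sqrt{s})$ fluctuations of $|{\cal I}_{T+1}|$, so a threshold $\theta$ in the gap yields $\P_\emptyset(\hat\Phi_\text{DS}\neq 0)\to 0$ and $\max_{S\in\C}\P_S(\hat\Phi_\text{DS}\neq 1)\to 0$, i.e.\ $R(\hat\Phi_\text{DS})\to 0$.

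The technical heart — carried out carefully in \cite{haupt:10} — is the joint calibration of the number of stages $T$, the allocation $\{m_t\}$, and the threshold $\theta$ so that (i) the total budget $m$ is respected; (ii) under $H_1$ only a bounded fraction of the $s$ signal coordinates is lost across the necessarily low-precision early stages; and (iii) under $H_0$ the active set shrinks enough that the retained signal is statistically conspicuous. I expect reconciling (i) and (ii) to be the main obstacle, since a signal coordinate must survive $\Theta(\log n)$ successive thresholdings under a fixed total precision budget; the geometric decay of the per-stage allocation, which keeps the cumulative budget bounded while letting the per-stage precision grow, is exactly what makes this feasible. The two sparsity conditions play only auxiliary roles: $s\leq n^{1-\beta}$ ensures a macroscopic gap between the signal sparsity and the ambient dimension, so that $T\to\infty$ distillation stages are available and produce a detectable enrichment, while $s>\log\log\log n$ is merely the requirement that $s\to\infty$, needed for the Chernoff/Chebyshev concentration used both in the null shrinkage and in the final count to hold over the $\Theta(\log n)$ stages; the triple logarithm only underscores how mild this is, and corresponds to the refinement noted in the footnote.
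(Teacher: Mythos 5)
A preliminary remark on the comparison itself: the paper does not prove Proposition~\ref{prop:DS} at all — it is imported verbatim (with the footnoted caveat on the condition $\log\log\log n<s$) from the second part of Theorem~III.1 of \cite{haupt:10}. So your reconstruction attempts more than the paper does, and its overall architecture is indeed the Distilled Sensing idea: multi-stage thresholding at zero, per-stage precisions $\gamma_t^2=m_t/|{\cal I}_t|$ with a geometrically decaying schedule (which, as you note, makes the budget constraint hold deterministically), a per-coordinate union bound over stages for signal survival, and a final test; under $H_0$ the final count is exactly $\mathrm{Bin}(n,2^{-T})$, so the false-alarm side of your argument is sound.

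The genuine gap is on the miss-probability side. Your per-coordinate computation only shows that a fixed signal coordinate survives all stages with probability at least a constant $\rho<1$; to make $\P_S(\hat\Phi_\text{DS}\neq 1)\to 0$ with the cardinality test you must show that the number of surviving signal coordinates exceeds the null fluctuation scale (order $\sqrt{s}$) with probability $1-o(1)$, and from the expectation bound alone Markov's inequality yields only a constant, not a vanishing, failure probability. The needed concentration is nontrivial precisely because the survivals are dependent: $\gamma_t^2=m_t/|{\cal I}_t|$ depends on how many signal coordinates are still active, so conditioning on the null trajectory does not make the signal-survival events independent, contrary to your "essentially independent" step. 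The standard repair — and the technical heart of \cite{haupt:10} — is stage-wise conditioning (given ${\cal I}_t$ the stage-$t$ survivals are independent Bernoulli) followed by per-stage Chernoff/Bernstein bounds and an induction over stages; this is exactly where the lower bound on $s$ does its work (enough active signal coordinates at every stage for the per-stage concentration to hold across all stages), so describing $s>\log\log\log n$ as "merely $s\to\infty$" understates its role. Finally, note that your construction is not the one analyzed in \cite{haupt:10}: you run $T\approx\log_2(n/s)$ stages and test $|{\cal I}_{T+1}|$, whereas DS uses $O(\log\log n)$ distillation steps and a test built on the final high-precision observations. You therefore cannot defer the calibration of $T$, $\{m_t\}$ and $\theta$ to that paper; either carry out the retained-signal concentration for your variant or align the construction with the cited procedure, in which case the proposition is simply the citation, as in the paper.
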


Note that this result is valid even if $s\approx\log\log\log n$,
meaning $s$ is nearly asymptotically constant. This suggests the
following modification: first randomly select $\tilde n$ elements of $\{
1,\ldots,n\}$ without replacement. Denote these by $\mathcal{E}=\{
E_1,\ldots,E_{\tilde n}\}$. Our sensing strategy will focus
exclusively on the entries $\mathcal{E}$ and ignore all the remaining
ones. In other words, our observation model is now
\[
Y_k=x_{E_{A_k}}+\Gamma^{-1}_k
W_k \qquad \forall k\in\{1,2,\ldots\} ,
\]
where $A_k\in\{1,\ldots,\tilde n\}$. The sensing budget is, however,
the same as in the original formulation
\[
\sum_{k=1}^\infty\Gamma^2_k
\leq m .
\]
In summary, we have exactly the same setting as before, but the
extrinsic dimension $n$ is now replaced by the smaller $\tilde n$. Now,
provided we choose $\tilde n$ large enough so that the conditions of
Proposition~\ref{prop:DS} are met for this new setting then an
improvement in performance is possible, yielding the following result.
%
\begin{prop}\label{prop:MDS}
Assume $s>\log\log\log n$. Furthermore, let $\mu>\sqrt{\frac
{32n\log\log\log n}{sm}}$. There is an adaptive sensing testing
strategy such that
\[
R(\hat\Phi)\rightarrow0 ,
\]
as $n\rightarrow\infty$.
\end{prop}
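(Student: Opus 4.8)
The plan is to implement the ``non-agnostic'' construction suggested just above, using Distilled Sensing as a black box on a randomly chosen low-dimensional sub-problem. The strategy I would analyze is: draw a set ${\cal E}=\{E_1,\ldots,E_{\tilde n}\}$ of $\tilde n$ distinct indices uniformly at random from $\{1,\ldots,n\}$ (extraneous randomness, which the model permits), discard every coordinate outside ${\cal E}$, and run the distilled sensing strategy ${\cal A}_\text{DS}$ and test $\hat\Phi_\text{DS}$ of Proposition~\ref{prop:DS} on the resulting $\tilde n$-dimensional problem with the full budget $m$; this composite is a legitimate adaptive sensing strategy. What makes the reduction work is precisely that DS is agnostic to the sparsity level and to the signal magnitude, so it may be fed a sub-problem whose effective sparsity is random and unknown to the procedure. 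I would take $\tilde n=\min\{n,\lfloor 2n\log\log\log n/s\rfloor\}$: the constant is forced by the target, since the DS magnitude condition $\mu>4\sqrt{\tilde n/m}$ is implied, for every $n$, by $\tilde n\le 2n\log\log\log n/s$ together with the hypothesis $\mu>\sqrt{32n\log\log\log n/(sm)}$, while $\tilde n\to\infty$ because $\tilde n\ge 2\log\log\log n$. Under $H_0$ the signal vanishes everywhere, in particular on ${\cal E}$, so Proposition~\ref{prop:DS} applied to the $\tilde n$-dimensional null problem already gives $\P_\emptyset(\hat\Phi\neq 0)\to 0$.

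The substance is the alternative. Fix $S\in\C$, $|S|=s$, and let $\tilde s:=|S\cap{\cal E}|$ be the number of retained signal components; this is hypergeometric with mean $s\tilde n/n$. When $\tilde n=n$ (which happens for small $s$) one has $\tilde s=s>\log\log\log n$ deterministically and the hypotheses of Proposition~\ref{prop:DS} hold outright; otherwise the mean lies in $[2\log\log\log n-1,\,2\log\log\log n]$. In that case a Chernoff bound for sampling without replacement gives $\P(\tilde s\le\log\log\log\tilde n)\to 0$ (using $\tilde n<n$, so $\log\log\log\tilde n\le\log\log\log n$, which for large $n$ sits below $\tfrac23\E[\tilde s]$), and a crude Markov bound gives $\P(\tilde s>\tilde n^{1/2})\to 0$, since $\E[\tilde s]$ is polylogarithmic in $n$ whereas $\tilde n$ is at least a fixed power of $n$ (here one uses that $s\le n^{1-\beta}$ for some fixed $\beta\in(0,1)$, as is implicit in the distilled-sensing setting). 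Hence, writing $G$ for the event $\{\log\log\log\tilde n<\tilde s\le\tilde n^{1/2}\}$, we get $\P(G)\to 1$, uniformly over $S\in\C$ because the law of $\tilde s$ depends on $S$ only through $|S|=s$.

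On $G$ the $\tilde n$-dimensional sub-problem satisfies every hypothesis of Proposition~\ref{prop:DS}, so --- invoking it in the form, apparent from the analysis in \cite{haupt:10}, that its guarantee is uniform over the sparsity levels in its admissible range --- the conditional miss probability given ${\cal E}$ tends to zero uniformly over ${\cal E}\in G$. Therefore
$$\max_{S\in\C}\P_S(\hat\Phi\neq 1)\ \le\ \P(G^{\mathrm{c}})+\sup_{{\cal E}\in G}\P_S(\hat\Phi\neq 1\mid{\cal E})\ \longrightarrow\ 0\ ,$$
and combining with $\P_\emptyset(\hat\Phi\neq 0)\to 0$ yields $R(\hat\Phi)=\P_\emptyset(\hat\Phi\neq 0)+\max_{S\in\C}\P_S(\hat\Phi\neq 1)\to 0$.

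The step I expect to be the main obstacle is the control of $\tilde s$: one must certify that the hypergeometric count of surviving signal entries falls inside the narrow DS operating window $(\log\log\log\tilde n,\tilde n^{1/2}]$ with probability tending to one, and it is the tension between this and the magnitude constraint $\mu>4\sqrt{\tilde n/m}$ that pins $\tilde n$ down to $\asymp 2n\log\log\log n/s$, hence the constant $32$ in the statement. A more routine, bookkeeping-level point is that Proposition~\ref{prop:DS} must be applied uniformly over its admissible sparsity range, since after subsampling the effective sparsity is a random quantity rather than a prescribed sequence.
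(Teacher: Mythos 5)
Your proposal is correct and follows essentially the same route as the paper's proof: subsample $\tilde n \asymp 2n\log\log\log n/s$ coordinates uniformly at random, show via a concentration bound that the retained hypergeometric sparsity $\tilde s$ exceeds the $\log\log\log$ threshold with probability tending to one, and invoke Proposition~\ref{prop:DS} as a black box on the sub-problem, the constant $32$ arising exactly as you say from $\mu>4\sqrt{\tilde n/m}$. The differences are minor: the paper uses Chebyshev rather than Chernoff/Markov and does not explicitly verify the upper sparsity condition $\tilde s\le\tilde n^{1-\beta}$ or cap $\tilde n$ at $n$, points you handle somewhat more carefully.
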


This result means that the statement of Corollary~\ref{coro:detection}
is essentially tight, at least provided there are more than $\log\log
\log n$ signal components under the alternative hypothesis. The
constant in the bound is certainly not optimal, and the factor $\log
\log\log n$ is (possibly) an artifact of the procedure. Closing the
small gap between the upper and lower bounds is, however, still a
direction for future research.
%
\begin{remark}\label{rmk:tightness}
The results above were derived assuming the non-zero signal components
are positive. Qualitatively these results remain the same even if one
allows both positive and negative components. A simple way to address
this setting is to write $\vec{x}$ as $\vec{x}=\vec{x}^{+}-\vec
{x}^{-}$, where $\vec{x}^{+}$ and $\vec{x}^{-}$ are sparse signal
vectors with positive components (and the joint number of non-zero
components is simply $s$). Now we can split the sensing budget into two
equal parts, and make use of each one to test for the presence/absence
or either signal. This approach yields the same asymptotic behavior,
and will at most result in larger constants in the bounds.

Also note that, in principle, a procedure in the spirit of the one
introduced in Chernoff \cite{chernoff:59} could be
used to construct an
adaptive sensing and testing methodology. However, the method of
analysis in that paper is not adequate to deal with our setting.
Nevertheless such procedure seems to work extremely well based on a
short simulation study we conducted, and its analytical
characterization presents an interesting direction for future work.
\end{remark}
\begin{pf*}{Proof of Proposition~\ref{prop:MDS}}
The idea is simply to use the construction above, with $\tilde n=\frac
{2 n\log\log\log n}{s}$. Because of the random entry selection step
(the choice of $\mathcal{E}$) the conditions of Proposition~\ref
{prop:DS} might not always be satisfied. However, this happens with
very low probability. Define $\tilde x\in\R^{\tilde n}$ where $\tilde
x_i=x(E_i), i=1,\ldots,\tilde n$. Suppose $x$ has $s$ non-zero
components, and let $\tilde s$ be the number of non-zero components of
$\tilde x$. Because of the sampling without replacement process,
$\tilde s$ is an hypergeometric random variable with mean
\[
\E[\tilde s]=\tilde n \frac{s}{n} = 2\log\log\log n ,
\]
and variance
\[
\V(\tilde s)=\tilde n \frac{s}{n} \biggl(1-\frac{s}{n} \biggr)
\frac
{n-\tilde n}{n-1}\leq\tilde n\frac{s}{n}=2\log\log\log n .
\]
This means that
\begin{eqnarray*}
\P(\tilde s < \log\log\log n) &=& \P\bigl(\tilde s -\E[\tilde s]< \log \log\log
n-\E[\tilde s]\bigr)
\\
&=& \P\bigl(\tilde s -\E[\tilde s]< -\log\log\log n\bigr)
\\
&\leq& \P\bigl(\bigl|\tilde s -\E[\tilde s]\bigr|> \log\log\log n\bigr)
\\
&\leq& \frac{\V(\tilde s)}{(\log\log\log n)^2}
\\
&\leq& \frac{2}{\log\log\log n},
\end{eqnarray*}
where we used Chebyshev's inequality on the second-to-last step. This
means that, with probability at least $1-2/\log\log\log n$ the
conditions of Proposition~\ref{prop:DS} are fulfilled. For
convenience, define the event $\Omega=\{\tilde s \geq\log\log\log
n\}$. Since the detection risk is always bounded by 2, we have
\[
R(\hat\Phi)\leq2 \frac{2}{\log\log\log n} + R(\hat\Phi|\Omega) ,
\]
therefore it suffices to show that, conditionally on $\Omega$, the
risk of our procedure vanishes asymptotically. From Proposition~\ref
{prop:DS}, we know that if $\mu>4\sqrt{\tilde n/m}$ the detection
risk converges to zero, which immediately yields
\[
\mu>4\sqrt{\frac{2n\log\log\log n}{sm}},
\]
concluding the proof.
\end{pf*}

\section{Signal estimation}

In this section we consider the signal estimation problem, where the
goal is to identify the support $S$ of the underlying signal $\vec{x}$
as accurately as possible. As in the detection case, we are interested
in characterizing the minimum signal amplitude $x_{\min}$ for which
estimation is still possible. Clearly estimation is statistically more
``difficult'' than signal detection, and therefore the requirements on
$x_{\min}$ are more stringent in this case. Nevertheless we show that
the dependence on the extrinsic dimension $n$ does not play a
significant role in the asymptotic performance bounds.

For the same reasons as in the previous section, we focus our attention
on the signal model in \eqref{eqn:simple_model}. Our main goal is the
estimation of the signal support set $S=\{i:x_i\neq0\}$. In other
words, our goal is to use adaptive sensing observations to construct an
estimate $\hat S$ which is ``close'' to $S$. The metric of interest is
the cardinality of the symmetric set difference
\[
d(\hat S,S)=|\hat S\Delta S|=\bigl|\bigl(\hat S\cap S^c\bigr)\cup\bigl(
\hat S^c\cap S\bigr)\bigr| ,
\]
where $S^c$ denotes the complement of $S$ in $\{1,\ldots,n\}$. Clearly
$d(\hat S,S)$ is just the number of errors in the estimate $\hat S$. In
a similar spirit to that of the previous section, we want to determine
how small can the signal magnitude $\mu$ be so that
%
\begin{equation}\label{eqn:risk_numerrors}
\max_{S\in\C} \E_S\bigl[d(\hat S,S)\bigr]\leq
\epsilon,
\end{equation}
where $\C$ is a class of sets, and $\epsilon>0$ is small. A different
error metric which is also popular in the literature is $\P_S(\hat
S\neq S)$, that is, the probability one does not achieve exact support
estimation. Clearly
\[
\P_S(\hat S\neq S)\leq\E_S\bigl[d(\hat S,S)\bigr] ,
\]
and therefore this is a less stringent metric. The tools developed in
this paper pertain $\E_S[d(\hat S,S)]$ and it is not clear if adaptive
sensing lower bounds for $\P_S(\hat S\neq S)$ can be derived easily
using a similar approach.

In addition, we will also consider a different support estimation risk
function. Define the \emph{False Discovery Rate} (FDR) and the \emph
{Non-Discovery Rate} (NDR) as
\[
\FDR(\hat S,S)=\E_S \biggl[\frac{|\hat S \setminus S|}{|\hat
S|} \biggr]\quad  \mbox{and}\quad
\NDR(\hat S,S)=\E_S \biggl[\frac{|S
\setminus\hat S|}{|S|} \biggr] .
\]
In the above definitions convention $0/0=0$. Ideally we want both these
quantities to be as small as possible, and so we can naturally define
the risk
\[
R_{\FDR+\NDR}(\hat S,S)=\max_{S\in\C} \bigl\{\FDR(\hat S,S)+
\NDR (\hat S,S) \bigr\} .
\]
Obviously $\E_S[d(\hat S,S)]\geq\FDR(\hat S,S)+\NDR(\hat S,S)$ and
these two measures of error can be dramatically different, therefore
controlling the risk $R_{\FDR+\NDR}(\hat S,S)$ is significantly
easier than controlling the absolute number of errors.

Our original goal is to study lower bounds for the class $\C$ of all
subsets of $\{1,\ldots,n\}$ with cardinality $s$. For technical
reasons this is a bit challenging, and to greatly simplify the analysis
we consider a different setting that nonetheless captures the essence
of the problem. Let $\C'$ denote the class consisting of sets of
cardinality $s$, $s+1$ and $s-1$. This class is only ``slightly''
bigger than $\C$. We instead consider procedures that exhibit good
performance when $S\in\C'$, that is, estimation procedures that are
``very mildly'' adaptive to unknown sparsity. Generalization of the
results to other classes of sets shall be considered in future work and
is out of the scope of this paper.

To aid in the presentation, we introduce some new notation. Namely let
$S_i=\1\{i\in S\}$. Similarly, for any estimator $\hat S$ let $\hat
S_i=\1\{i\in\hat S\}$. Note that the joint description of $\hat S_i$
for all $i$ is equivalent to $\hat S$. For analysis purposes, it is
convenient to consider only \emph{symmetric} procedures, meaning that
for any $S\in\C'$
%
\begin{equation}\label
{eqn:symmetry1}
\forall i,j\in S \qquad \P_S(\hat S_i \neq1)=
\P_S(\hat S_j \neq1)
\end{equation}
and
%
\begin{equation}\label{eqn:symmetry2}
\forall i,j\notin S\qquad  \P_S(\hat S_i \neq0)=
\P_S(\hat S_j \neq0) .
\end{equation}
{\sloppy Although this might seem overly restrictive, it is indeed not
the case. Any inference procedure can be ``symmetrized'' without
increasing its maximal risk. In other words, given an estimator $\hat
S$ we can construct another estimator $\hat S^{(\perm)}$ satisfying
\eqref{eqn:symmetry1} and \eqref{eqn:symmetry2} and such that
\[
\E_S\bigl[d\bigl(\hat S^{(\perm)},S\bigr)\bigr] \leq\max
_{S'\in\C'} \E_{S'}\bigl[d\bigl(\hat S,S'
\bigr)\bigr] ,
\]
for all sets $S\in\C'$. The symmetrization is achieved by
randomization. Let ${\perm\dvtx \{1,\ldots,n\}\rightarrow\{1,\ldots,n\}
}$ be a permutation of $\{1,\ldots,n\}$ chosen uniformly at random
among the set of $n!$ possible permutations. Let $\hat S$ be a
particular estimator we are going to symmetrize. Proceed by exchanging
the identity of the entries of $\vec{x}$ using this permutation, or
equivalently by taking $A^{(\perm)}_k=A_{\perm^{-1}(k)}$ for all $k$,
and use the estimator $\hat S$ on the collected data. Finally, reverse
the permutation, namely defining $\hat S^{(\perm)}_i=\hat S_{\perm
(i)}$, for all $i\in\{1,\ldots,n\}$. Using this construction, we get
the following lemma, proved in the \hyperref[app]{Appendix}.
%
\begin{lemma}\label{lemma:symmetrization}
Let $\hat S$ be any adaptive sensing procedure. The random
symmetrization approach described in the paragraph above yields another
adaptive sensing procedure $\hat S^{(\perm)}$ such that, for any $S\in
\C'$
\[
\forall i\in S \qquad \P\bigl(\hat S^{(\perm)}_i\neq1\bigr)=
\frac{1}{|S|{n \choose
|S|}}\sum_{S'\in\C':|S'|=|S|} \sum
_{j\in S'} \P_{S'}(\hat S_j \neq 1)
\]
and
\[
\forall i\notin S\qquad  \P\bigl(\hat S^{(\perm)}_i\neq0\bigr)=
\frac{1}{(n-|S|){n
\choose|S|}} \sum_{S'\in\C:|S'|=|S|} \sum
_{j\notin S'} \P _{S'}(\hat S_j \neq0) .
\]
In addition, the following is also true:\vspace*{1pt}
\[
\E_S\bigl[d\bigl(\hat S^{(\perm)},S\bigr)\bigr]\leq
\frac{1}{{n \choose|S|}} \sum_{S'\in\C:|S'|=|S|} \E_{S'}\bigl[d
\bigl(\hat S,S'\bigr)\bigr] \leq\max_{S'\in\C
':|S'|=|S|}
\E_{S'}\bigl[d\bigl(\hat S,S'\bigr)\bigr] .
\]
\end{lemma}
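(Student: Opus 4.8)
The plan is to reduce everything to a counting argument by observing that relabelling the coordinates of $\vec x$ by a permutation maps the observation model \eqref{eqn:observations} for a signal with support $S$ onto the same model for a signal with support $\perm(S)$. Concretely, the randomized procedure described above is a bona fide random adaptive sensing procedure: sampling $\perm$ is just extraneous randomness, relabelling the measured coordinates does not alter the total precision spent, so the budget \eqref{eqn:budget} is still respected, and the measurability structure of \eqref{eqn:observations} is preserved. The first — and essentially only delicate — step is to make precise, at the level of the full-data likelihood $f(d;S)$ and its factorization (the one used in the proof of Theorem~\ref{thm:detection}), the claim that, conditionally on $\perm$, the data fed to the black box $\hat S$ when the true support is $S$ has law $\P_{\perm(S)}$, so that $\hat S^{(\perm)}_i=\hat S_{\perm(i)}$ obeys
\[
\P\!\left(\hat S^{(\perm)}_i\neq 1\,\middle|\,\perm\right)=\P_{\perm(S)}\!\left(\hat S_{\perm(i)}\neq 1\right),\qquad \P\!\left(\hat S^{(\perm)}_i\neq 0\,\middle|\,\perm\right)=\P_{\perm(S)}\!\left(\hat S_{\perm(i)}\neq 0\right).
\]

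The second step is to average over $\perm$. Fix $S\in\C'$ and $i\in S$. As $\perm$ runs over the $n!$ permutations, the pair $(\perm(S),\perm(i))$ takes each value $(S',j)$ with $|S'|=|S|$ and $j\in S'$ exactly $(|S|-1)!\,(n-|S|)!$ times (one chooses a bijection $S\setminus\{i\}\to S'\setminus\{j\}$ and a bijection $S^c\to (S')^c$). Dividing by $n!$ gives the weight $\big(|S|\binom{n}{|S|}\big)^{-1}$ and hence the first identity of the lemma; the restriction $S'\in\C'$ is vacuous since $|S|\in\{s-1,s,s+1\}$. Running the same argument with $i\notin S$, where the pair $(\perm(S),\perm(i))$ now hits each $(S',j)$ with $j\notin S'$ exactly $|S|!\,(n-|S|-1)!$ times, produces the second identity. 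Because the resulting right-hand sides do not depend on the particular index $i$, the symmetry conditions \eqref{eqn:symmetry1}--\eqref{eqn:symmetry2} hold automatically.

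Finally, for the risk inequality I would expand
\[
\E_S[d(\hat S^{(\perm)},S)]=\sum_{i\in S}\P\!\left(\hat S^{(\perm)}_i\neq 1\right)+\sum_{i\notin S}\P\!\left(\hat S^{(\perm)}_i\neq 0\right),
\]
insert the two identities, and use that each summand is constant over $i\in S$ and over $i\notin S$: the multiplicities $|S|$ and $n-|S|$ cancel the matching denominators, and the two inner sums recombine as $\sum_{j\in S'}\P_{S'}(\hat S_j\neq 1)+\sum_{j\notin S'}\P_{S'}(\hat S_j\neq 0)=\E_{S'}[d(\hat S,S')]$. This yields $\E_S[d(\hat S^{(\perm)},S)]=\binom{n}{|S|}^{-1}\sum_{S':\,|S'|=|S|}\E_{S'}[d(\hat S,S')]$, and replacing the average by the maximum over such $S'$ gives the last inequality. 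I expect the combinatorial accounting and the recombination to be entirely routine; the only genuine obstacle is the first step, namely the careful verification that, within the sequential model, relabelling the measured coordinate at every time step pushes $\P_S$ forward to $\P_{\perm(S)}$ and commutes with the (data-dependent) action of $\hat S$.
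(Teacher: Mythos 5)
Your proposal is correct and follows essentially the same route as the paper: the paper also reduces the identities to the fact that relabelling pushes $\P_S$ forward to $\P_{\perm(S)}$ and that, for a uniform permutation, the pair $(\perm(S),\perm(i))$ is uniform over pairs $(S',j)$ with $j\in S'$ (the paper phrases this by conditioning on $S^{(\perm)}=\perm(S)$ and noting $\perm(i)$ is conditionally uniform on it, while you verify the same fact by directly counting the $(|S|-1)!\,(n-|S|)!$, resp.\ $|S|!\,(n-|S|-1)!$, permutations), after which the risk bound follows by summing exactly as you describe. The one step you flag as delicate — that the sequential likelihood factorization is equivariant under relabelling so the black box sees data with law $\P_{\perm(S)}$ — is treated at the same (implicit) level of detail in the paper, so there is no gap relative to it.
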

}
This ensures that without loss of generality we can consider only
symmetric procedures. It is important to note that this approach
is valid only if the class $\C'$ is invariant under permutations.
Finally, for symmetric procedures the lower bounds we derive are
also applicable to measures of risk different than \eqref
{eqn:risk_numerrors}, such as the \emph{average estimation risk}
$\frac{1}{|\C'|}\sum_{S'\in\C'} \E_{S'}[d(\hat S,S')]$.

\subsection{Main results -- Estimation}
%
\begin{theorem}\label{thm:estimation}
Let $\C'$ denote the class of all subsets of $\{1,\ldots,n\}$ with
cardinality $s$, $s+1$ and $s-1$. Let $\hat S\equiv\hat S(D)$ be an
arbitrary adaptive sensing estimator, where $D=\{Y_i,A_i,\Gamma_i\}
_{i=1}^{\infty}$. If
\[
\max_{S\in\C'} \E_S\bigl[d(\hat S,S)\bigr]\leq
\epsilon,
\]
where $0<\epsilon<1$ then necessarily
\[
x_{\min} \geq\sqrt{\frac{2n}{m} \biggl(\log s + \log
\frac
{n-s}{n+1} + \log\frac{1}{2\epsilon} \biggr)} .
\]
\end{theorem}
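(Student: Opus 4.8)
The plan is to follow the blueprint of the proof of Theorem~\ref{thm:detection}, but to decompose the estimation problem into $n$ coupled \emph{coordinatewise} testing problems, all of which must be paid for out of the single sensing budget $m$. By Lemma~\ref{lemma:symmetrization} we may assume throughout that $\hat S$ is symmetric, i.e.\ satisfies \eqref{eqn:symmetry1}--\eqref{eqn:symmetry2}; this is precisely what will let us turn error probabilities under non-central sets into clean per-coordinate bounds, and is legitimate because the bound on $x_{\min}$ does not depend on the procedure. Fix a reference set $S_0\in\C'$ with $|S_0|=s$ and set, as in \eqref{eqn:b}, $b_i=\sum_{k=1}^\infty\E_{S_0}[\1\{A_k=i\}\Gamma^2_k]$, so that $b_i\ge 0$ and $\sum_{i=1}^n b_i\le m$ by \eqref{eqn:budget}.

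The first step is a coordinatewise likelihood-ratio inequality. For $i\in S_0$ pair $S_0$ with $S_0\setminus\{i\}\in\C'$, and for $i\notin S_0$ pair $S_0$ with $S_0\cup\{i\}\in\C'$; in either case the two laws differ only through $x_i$, which equals $\mu$ under whichever of the two sets contains $i$ and $0$ under the other. Using the factorization \eqref{eqn:LR} together with the Gaussian Kullback--Leibler computation already carried out in the proof of Theorem~\ref{thm:detection}, one checks that in \emph{both} cases the divergence computed \emph{from} $\P_{S_0}$ equals $\tfrac{\mu^2}{2}b_i$; it is exactly this choice of direction that makes the \emph{same} vector $(b_1,\dots,b_n)$ appear in every subproblem. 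Regarding $\hat S_i\in\{0,1\}$ as a binary decision for ``$i\in S$?'' and invoking the total-variation/KL bound (Lemma~2.6 of \cite{tsybakov:09}) exactly as in \eqref{eqn:keylowerbound}, we obtain for every $i$
\[
\tfrac12\exp\!\left(-\tfrac{\mu^2}{2}b_i\right)\;\le\;\P_{S_0}\big(\hat S_i\ne\1\{i\in S_0\}\big)\;+\;\begin{cases}\P_{S_0\setminus\{i\}}(\hat S_i\ne0),& i\in S_0,\\ \P_{S_0\cup\{i\}}(\hat S_i\ne1),& i\notin S_0.\end{cases}
\]

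The second step bounds the two families of terms on the right. Summing the first term over all $i$ gives exactly $\E_{S_0}[d(\hat S,S_0)]\le\epsilon$. For the second term, $S_0\setminus\{i\}$ has $n-s+1$ coordinates outside its support, so symmetry \eqref{eqn:symmetry2} together with $\E_{S_0\setminus\{i\}}[d(\hat S,S_0\setminus\{i\})]\le\epsilon$ forces $\P_{S_0\setminus\{i\}}(\hat S_i\ne0)\le\epsilon/(n-s+1)$; likewise $\P_{S_0\cup\{i\}}(\hat S_i\ne1)\le\epsilon/(s+1)$. Summing the displayed inequality over all $n$ coordinates therefore yields
\[
\tfrac12\sum_{i=1}^n\exp\!\left(-\tfrac{\mu^2}{2}b_i\right)\;\le\;\epsilon\Big(\tfrac{n+1}{s+1}+\tfrac{s}{n-s+1}\Big)\;\le\;\frac{\epsilon\,n(n+1)}{s(n-s)},
\]
the last step being an elementary inequality valid for $1\le s\le n-1$. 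On the left, convexity of $t\mapsto e^{-t}$ and $\sum_i b_i\le m$ give $\tfrac12\sum_i\exp(-\tfrac{\mu^2}{2}b_i)\ge\tfrac{n}{2}\exp(-\tfrac{\mu^2 m}{2n})$. Combining the two displays, taking logarithms and rearranging produces $\mu^2\ge\tfrac{2n}{m}\big(\log s+\log\tfrac{n-s}{n+1}+\log\tfrac{1}{2\epsilon}\big)$, which is the claimed bound (and is vacuous, as it must be, when the bracket is negative).

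The one genuinely new ingredient relative to Theorem~\ref{thm:detection}, and the main obstacle, is engineering the $n$ coordinatewise comparisons so that a single budget vector subject to the \emph{one} constraint $\sum_i b_i\le m$ governs all of them simultaneously: this is what forces every Kullback--Leibler divergence to be taken from the common reference $\P_{S_0}$ and dictates the asymmetric pairing ($S_0\setminus\{i\}$ for $i\in S_0$, $S_0\cup\{i\}$ otherwise) together with the corresponding choice of event in the Tsybakov bound. A secondary but essential point is that the off-reference error probabilities can be controlled by $\epsilon/(n-s+1)$ and $\epsilon/(s+1)$ only because Lemma~\ref{lemma:symmetrization} lets us assume \eqref{eqn:symmetry1}--\eqref{eqn:symmetry2}; without symmetrization one is left with a maximum over coordinates that does not aggregate cleanly after the sum over $i$, and the $\log s$ scaling would be lost.
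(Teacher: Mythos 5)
Your proof is correct and follows essentially the same route as the paper: symmetrize via Lemma~\ref{lemma:symmetrization}, compare $\P_{S_0}$ with $\P_{S_0\Delta\{i\}}$ coordinatewise through the Tsybakov TV/KL bound with the KL taken from the reference measure (giving $\tfrac{\mu^2}{2}b_i$), and aggregate over $i$ using the single budget constraint. The only differences are cosmetic --- you bound the on-reference errors by summing them to $\epsilon$ rather than invoking symmetry there, and you aggregate by Jensen on the exponentiated inequalities instead of summing the logarithmic bounds --- and both variants reproduce exactly the paper's constant.
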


The proof of the theorem is presented at the end of this section. As
before it is useful to look at the asymptotic behavior, and the case
$s\ll n$ is particularly interesting.
%
\begin{coro}\label{coro:estimation}
Consider the setting of Theorem~\ref{thm:estimation} and assume
$s=\mathrm{o}(n)$ as $n\rightarrow\infty$. Let $\hat S_n$ be an arbitrary
estimation procedure for which
\[
\lim_{n\rightarrow\infty} \max_{S\in\C'} \E_S
\bigl[d(\hat S_n,S)\bigr]=0 .
\]
Necessarily
\[
x_{\min} \geq\sqrt{2\frac{n}{m} (\log s+\omega_n )} ,
\]
where $\omega_n$ is a sequence for which $\lim_{n\rightarrow\infty}
\omega_n=\infty$.
\end{coro}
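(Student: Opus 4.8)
The plan is to obtain Corollary~\ref{coro:estimation} as a direct asymptotic consequence of Theorem~\ref{thm:estimation}; all of the substance lives in the theorem, and the only work is to isolate the $n$-dependence of its lower bound. Consider the sequence of problems indexed by $n$, write $x_{\min}$ for the signal magnitude in the $n$-th problem, and set $\epsilon_n:=\max_{S\in\C'}\E_S[d(\hat S_n,S)]$. By hypothesis $\epsilon_n\to 0$, so there is an $n_0$ with $0<\epsilon_n<1$ for every $n\geq n_0$; the degenerate case $\epsilon_n=0$ cannot occur for a finite $x_{\min}$, since applying Theorem~\ref{thm:estimation} with every $\epsilon\in(0,1)$ and letting $\epsilon\downarrow 0$ would force $x_{\min}=\infty$. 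Hence for all $n\geq n_0$ Theorem~\ref{thm:estimation} applies with $\epsilon=\epsilon_n$ and yields
$$x_{\min}\geq\sqrt{\frac{2n}{m}\left(\log s+\log\frac{n-s}{n+1}+\log\frac{1}{2\epsilon_n}\right)}\ .$$

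Next I would simply define $\omega_n:=\log\frac{n-s}{n+1}+\log\frac{1}{2\epsilon_n}$ for $n\geq n_0$ (and, say, $\omega_n:=0$ for $n<n_0$, which is immaterial for the limit), so that the displayed inequality reads exactly $x_{\min}\geq\sqrt{(2n/m)(\log s+\omega_n)}$. It then remains to verify $\omega_n\to\infty$. Since $s=o(n)$ we have $\frac{n-s}{n+1}=\frac{1-s/n}{1+1/n}\to 1$, hence $\log\frac{n-s}{n+1}\to 0$; and since $\epsilon_n\to 0$ we have $\log\frac{1}{2\epsilon_n}\to+\infty$. The sum of a vanishing term and a term diverging to $+\infty$ diverges to $+\infty$, so $\omega_n\to\infty$ and the argument is complete.

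There is essentially no obstacle here: the corollary merely records the asymptotic form of Theorem~\ref{thm:estimation}, in complete parallel with Corollary~\ref{coro:detection}. The only points worth stating explicitly are that $\omega_n$ is allowed to depend on the estimator sequence $\hat S_n$ through its rate of convergence $\epsilon_n$, and that the conclusion is to be read asymptotically (``$x_{\min}\geq\cdots$ for all sufficiently large $n$''). If a more quantitative version is wanted, one may note that \emph{any} sequence with $\omega_n\to\infty$ and $\omega_n\leq\log\frac{n-s}{n+1}+\log\frac{1}{2\epsilon_n}$ is admissible, which exhibits the divergence of $\omega_n$ as being tied to --- and never faster than --- the rate at which the estimation risk $\epsilon_n$ tends to zero; since that rate can be arbitrarily slow, so can the divergence of $\omega_n$, exactly as in the detection case.
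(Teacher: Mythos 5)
Your proposal is correct and matches the paper's (implicit) argument: the corollary is recorded as an immediate asymptotic consequence of Theorem~\ref{thm:estimation}, obtained exactly as you do by taking $\epsilon=\epsilon_n\to 0$, absorbing $\log\frac{n-s}{n+1}\to 0$ (from $s=o(n)$) and $\log\frac{1}{2\epsilon_n}\to\infty$ into the divergent sequence $\omega_n$. Your remarks on the degenerate case $\epsilon_n=0$ and on the dependence of $\omega_n$ on the rate of convergence are fine and consistent with the paper's reading.
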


For the $\FDR+\NDR$ risk, we can use the same proof approach to obtain a
much less restrictive bound on the signal magnitude.
%
\begin{coro}\label{coro:estimation_FDR}
Consider the setting of Theorem~\ref{thm:estimation} and assume
$s=\mathrm{o}(n)$. Let $\hat S_n$ be an arbitrary estimation procedure such that
\[
\lim_{n\rightarrow\infty} R_{\FDR+\NDR}(\hat S,S)=0 .
\]
Necessarily
\[
x_{\min} \geq\omega_n\sqrt{\frac{n}{m}} ,
\]
where $\omega_n$ is a sequence for which $\lim_{n\rightarrow\infty}
\omega_n=\infty$.
\end{coro}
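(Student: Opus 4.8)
The plan is to reuse the likelihood-ratio machinery behind Theorem~\ref{thm:estimation} essentially verbatim, the only real change being that the FDR+NDR hypothesis pins down the relevant per-coordinate error probabilities less sharply. First I would reduce to symmetric procedures: exactly as in Lemma~\ref{lemma:symmetrization} --- using that $\C'$ is permutation invariant and that the symmetrizing randomization averages FDR and NDR separately over all sets of a given cardinality --- one checks that symmetrization does not increase $R_\text{FDR+NDR}$, so that under $\P_S$ both $\P_S(i\notin\hat S)$ and $\P_S(i\in\hat S)$ depend on $i$ only through $\1\{i\in S\}$. Write $\delta_n=R_\text{FDR+NDR}(\hat S_n,S)\to0$ (this is a maximum over $\C'$ and so does not depend on $S$) and fix a set $S$ with $|S|=s$. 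Applying the non-discovery bound to $S'=S\cup\{j\}\in\C'$, which has cardinality $s+1$, then gives for every $j\notin S$ that $\P_{S'}(j\notin\hat S)=\text{NDR}(\hat S_n,S')\le\delta_n$ by symmetry.

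Next I would extract a usable bound on the individual false-positive probability $\P_S(j\in\hat S)$ from $\text{FDR}(\hat S_n,S)=\E_S[\,|\hat S\setminus S|/|\hat S|\,]\le\delta_n$; this is the one step with no analogue in the detection argument, the awkward feature being the random cardinality $|\hat S|$ in the denominator. Writing $Z=|\hat S\setminus S|$ and using $|\hat S|\le s+Z$, I have $\E_S[Z/(s+Z)]\le\delta_n$; since $Z/(s+Z)\ge\frac{1}{2}\min(Z/s,1)$, restricting the expectation to $\{Z>s\}$ and to $\{Z\le s\}$ gives both $\P_S(Z>s)\le2\delta_n$ and $\E_S[Z\,\1\{Z\le s\}]\le2s\delta_n$, and as $Z\le n-s$ almost surely these combine to $\E_S[Z]\le2n\delta_n$. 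By symmetry $\P_S(j\in\hat S)=\E_S[Z]/(n-s)\le 2n\delta_n/(n-s)$, which tends to $0$ because $s=o(n)$. Setting $\eta_n=\delta_n+2n\delta_n/(n-s)\to0$, I conclude that for every $j\notin S$ the one-coordinate test $\hat\Phi=\hat S_j$ satisfies $\P_S(\hat S_j=1)+\P_{S'}(\hat S_j=0)\le\eta_n$.

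From this point the argument is the proof of Theorem~\ref{thm:detection} applied to the pair $(\P_S,\P_{S'})$, which differ only in the single coordinate $j$ (zero under $S$, equal to $\mu$ under $S'$). The chain $\P_S(\hat S_j=1)+\P_{S'}(\hat S_j=0)\ge 1-\text{TV}(\P_S,\P_{S'})\ge\frac{1}{2}e^{-\text{KL}(\P_S\|\P_{S'})}$ yields $\text{KL}(\P_S\|\P_{S'})\ge\log\frac{1}{2\eta_n}$, while factoring the joint density (all sensing-strategy factors cancelling in the likelihood ratio) gives $\text{KL}(\P_S\|\P_{S'})=\frac{\mu^2}{2}b_j^{(S)}$ with $b_j^{(S)}=\sum_k\E_S[\1\{A_k=j\}\Gamma^2_k]$ and $\sum_{i=1}^n b_i^{(S)}=\E_S[\sum_k\Gamma^2_k]\le m$ by \eqref{eqn:budget}. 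Summing $\frac{\mu^2}{2}b_j^{(S)}\ge\log\frac{1}{2\eta_n}$ over the $n-s$ indices $j\notin S$ and invoking the budget constraint gives $\mu^2\ge\frac{2(n-s)}{m}\log\frac{1}{2\eta_n}$; since $s=o(n)$ and $\eta_n\to0$ this is of the form $x_{\min}=\mu\ge\omega_n\sqrt{n/m}$ with $\omega_n\to\infty$, which is the claim.

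The hard part will be the middle paragraph: turning control of the ratio $|\hat S\setminus S|/|\hat S|$ into control of a single false-positive probability, together with confirming that the symmetrization of Lemma~\ref{lemma:symmetrization} carries over to the FDR+NDR risk. Everything else is a transcription of the detection proof, the essential point being that the fixed level $\epsilon$ there --- and the sharper $\epsilon/s$-type bounds used for Theorem~\ref{thm:estimation} --- is here replaced by a sequence $\eta_n$ that merely tends to zero, which is exactly why the $\log s$ of Theorem~\ref{thm:estimation} drops out and only a slowly diverging $\omega_n$ survives.
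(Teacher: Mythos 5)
Your proposal is correct and follows essentially the same route as the paper's sketch: reduce to symmetric procedures via Lemma~\ref{lemma:symmetrization}, translate the NDR and FDR constraints into per-coordinate error probabilities of order $\epsilon$ (rather than the $\epsilon/s$ and $\epsilon/(n-s)$ of Theorem~\ref{thm:estimation}), and then run the same KL/budget argument over the single-coordinate perturbations $S^{(j)}$. The only cosmetic difference is your handling of the random denominator in the FDR: the paper simply bounds $|\hat S|\leq n$ to get $\alpha \leq n\epsilon/(n-s)$, whereas your $|\hat S|\leq s+|\hat S\setminus S|$ truncation argument reaches the same conclusion with a bit more work.
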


A sketch of the proof of this corollary can be found in the \hyperref[app]{Appendix}.
\begin{pf*}{Proof of Theorem~\ref{thm:estimation}}
The proof follows a similar approach to that of Theorem~\ref
{thm:detection}, and capitalizes heavily on the symmetry of the
estimation procedure. In light of Lemma~\ref{lemma:symmetrization}, it
suffices to consider symmetric procedures, that is, procedures that
satisfy \eqref{eqn:symmetry1} and \eqref{eqn:symmetry2}. Let $S\in\C
'$ be arbitrary and assume that
\[
\E_S\bigl[d(\hat S,S)\bigr]\leq\epsilon,
\]
where $0<\epsilon<1$. Clearly
\begin{eqnarray*}
\E_S\bigl[d(\hat S,S)\bigr] &=& \E_S \Biggl[ \sum
_{i=1}^n \1\{\hat S_i \neq
S_i\} \Biggr]
\\
&=& \sum_{i\in S} \E_S \bigl[\1\{\hat
S_i \neq1\} \bigr] + \sum_{j\notin S}
\E_S \bigl[\1\{\hat S_j \neq0\} \bigr]
\\
&=& \sum_{i\in S} \P_S (\hat
S_i \neq1 ) + \sum_{j\notin
S}
\P_S (\hat S_j \neq0 ) .
\end{eqnarray*}
As we consider symmetric procedures, we conclude that
\[
\forall i\in S \qquad \P_S(\hat S_i \neq1)\leq
\frac{\epsilon}{|S|}
\]
and
\[
\forall i\notin S\qquad  \P_S(\hat S_i \neq0)\leq
\frac{\epsilon}{n-|S|} .
\]

For our purposes, it is convenient to rewrite the likelihood ratio
\eqref{eqn:LR} as
\begin{eqnarray*}
\LR_{S,S'}(d) &=& \frac{f(d;S)}{f(d;S')}
\\
&=& \prod_{i=1}^n \prod
_{k:a_k=i} \frac{f_{Y_k|A_k,\Gamma
_k}(y_k|a_k,\gamma_k;S)}{f_{Y_k|A_k,\Gamma_k}(y_k|a_k,\gamma_k;S')} .
\end{eqnarray*}

Now let $S\in\C$ be an arbitrary set of cardinality $s$, and define
$S^{(i)}\in\C'$ to be
\[
S^{(i)}= \left\{ %
\begin{array} {l@{\qquad}l} S\setminus\{i\} & \mbox{if
} i\in S,
\\
S\cup\{i\} & \mbox{if } i\notin S, \end{array} %
\rright.
\]
in words, we either remove element $i$ if $i\in S$, or add it
otherwise, meaning that $S\Delta S^{(i)}=\{i\}$. We proceed in a similar
way as we did in the signal detection scenario.\vadjust{\goodbreak} Let ${i\in\{ 1,\ldots,n\}}$ be
arbitrary. We conclude that %
\[
\forall i\in S\qquad  \E_S [
\log\LR_{S,S^{(i)}} ] \geq-\log \bigl(2\P_S (\hat S_i
\neq1 )+2\P_{S^{(i)}} (\hat S_i \neq0 ) \bigr)
\]
and
\[
\forall i\notin S \qquad \E_S [ \log\LR_{S,S^{(i)}} ] \geq -\log
\bigl(2\P_S (\hat S_i \neq0 )+2\P_{S^{(i)}} (\hat
S_i \neq1 ) \bigr) .
\]
We now take advantage of the symmetry of the estimator, to conclude that\vspace*{-1pt}
%
\begin{equation}\label
{eqn:LR_lowerbound1}
\forall i\in S\qquad  \E_S [ \log\LR_{S,S^{(i)}} ] \geq-\log \biggl(
\frac{2\epsilon}{s}+\frac{2\epsilon}{n-s+1} \biggr)
\end{equation}
and\vspace*{-1pt}
%
\begin{equation}\label{eqn:LR_lowerbound2}
\forall i\notin S \qquad \E_S [ \log\LR_{S,S^{(i)}} ] \geq -\log
\biggl(\frac{2\epsilon}{n-s}+\frac{2\epsilon}{s+1} \biggr) .
\end{equation}

Now that we have lower bounds for $\E_S  [ \log\LR
_{S,S^{(i)}} ]$ we need to evaluate this quantity in terms of
$\mu$. This is easily done by noting that for $i\in\{1,\ldots,n\}$\vspace*{-1pt}
\begin{eqnarray*}
\E_S [ \log\LR_{S,S^{(i)}} ] &=& \E_S \biggl[\sum
_{k:A_k=i} \log\frac{f_{Y_k|A_k,\Gamma_k}(Y_k|A_k,\Gamma
_k;S)}{f_{Y_k|A_k,\Gamma_k}(Y_k|A_k,\Gamma_k;S^{(i)})} \biggr]
\\
&=&
\E_S \biggl[\sum_{k:A_k=i} \frac{\mu^2}{2}
\Gamma^2_k \biggr] .
\end{eqnarray*}
Note that we cannot yet evaluate the above expression, as one cannot
invoke the sensing budget constraint \eqref{eqn:budget}. This can be
addressed by summing each of the above terms over $i\in\{1,\ldots,n\}
$. On one hand\vspace*{-1pt}
%
\begin{equation}\label{eqn:upper_bound_estimation}
\sum_{i=1}^n \E_S [ \log
\LR_{S,S^{(i)}} ] = \E_S \Biggl[\sum_{i=1}^n
\sum_{k:A_k=i} \frac{\mu^2}{2}\Gamma^2_k
\Biggr] = \E_S \Biggl[\sum_{k=1}^\infty
\frac{\mu^2}{2}\Gamma^2_k \Biggr] \leq
\frac{m \mu^2}{2} .
\end{equation}
On the other hand\vspace*{-1pt}
\begin{eqnarray*}
\sum_{i=1}^n \E_S [ \log
\LR_{S,S^{(i)}} ] &=& \sum_{i\in S} \E_S
[ \log\LR_{S,S^{(i)}} ]+\sum_{i\notin S}
\E_S [ \log\LR_{S,S^{(i)}} ]
\\
&\geq& -s \log \biggl(\frac{2\epsilon}{s}+\frac{2\epsilon
}{n-s+1} \biggr)-(n-s)\log
\biggl(\frac{2\epsilon}{n-s}+\frac
{2\epsilon}{s+1} \biggr) .
\end{eqnarray*}
We can get a more insightful bound by reorganizing the various terms\vspace*{-1pt}
\begin{eqnarray*}\label{eqn:above}
&&\sum_{i=1}^n \E_S [ \log
\LR_{S,S^{(i)}} ] \nonumber\\
&&\quad \geq n\log \frac{1}{2\epsilon}+s\log\frac{s(n-s+1)}{n+1}+(n-s)
\log\frac
{(n-s)(s+1)}{n+1}
\nonumber
\\
&&\quad = n\log\frac{1}{2\epsilon}+s\log s +(n-s)\log(s+1)+s\log\frac
{n-s+1}{n+1}+(n-s)
\log\frac{n-s}{n+1}
\nonumber\\
&&\quad \geq n \biggl(\log s+\log\frac{n-s}{n+1}+\log\frac{1}{2\epsilon
} \biggr),
\nonumber
\end{eqnarray*}
where the last inequality follows by noting that $\log(s+1)>\log s$
and $\log(n-s+1)>\log(n-s)$. Using this together with \eqref
{eqn:upper_bound_estimation} concludes the proof.
\end{pf*}

\subsection{Tightness of the estimation lower bounds}\label
{sec:tightness_estimation}

Similarly to what happened in the detection setting the lower bounds
derived for estimation are also tight, in the sense that there are
inference procedures able to achieve them. In Malloy and Nowak \cite{malloy:11,malloy:11b}, a slightly
different problem was considered,
where each measurement had the same accuracy/precision and one desired
to control the total number of errors in $\hat S$. Their results were
stated in term of conditions on the signal magnitude $\mu$ that were
necessary to ensure the risk converged to zero. In their setting, there
is no strict sensing budget, but instead only control over the expect
precision budget used. In other words, the procedures in Malloy and Nowak \cite{malloy:11,malloy:11b} do not always
satisfy the sensing budget in
equation \eqref{eqn:budget}, but instead satisfy an \emph{expected}
sensing budget constraint
\[
\E \Biggl[\sum_{k=1}^\infty
\Gamma^2_k \Biggr] \leq m .
\]
Such methods can be modified to ensure that the sensing budget \eqref
{eqn:budget} is fulfilled with increasingly high probability (as $n$
grows) without altering their asymptotic performance behavior, and we
can state the following result, proved in the \hyperref[app]{Appendix}.
%
\begin{prop}\label{prop:estimation_upperbound}
Assume $s \leq\frac{n}{(\log_2^2 n) - 3}$. Let
\[
\mu\geq\sqrt{\frac{4n}{m}(2\log s+5\log\log_2 n)} .
\]
There is a sensing and estimation strategy yielding an estimator $\hat
S$ such that
\[
\max_{S\in\C'} \E_S\bigl[d(\hat S,S)\bigr]
\rightarrow0 ,
\]
as $n\rightarrow\infty$.
\end{prop}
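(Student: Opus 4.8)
The plan is to realize $\hat S$ as a truncated version of the sequential thresholding estimator of \cite{malloy:11,malloy:11b}, treating that estimator essentially as a black box and converting its \emph{expected}-budget guarantee into the hard constraint \eqref{eqn:budget}. Recall that sequential thresholding runs in a deterministic number $K=O(\log n)$ of passes: in pass $j$ every coordinate still under consideration is measured once with a common precision $\gamma_j^2$, coordinates whose observation falls below zero are discarded, and $\hat S_{\mathrm{MN}}$ is the set of survivors after the last pass. When the precisions are scaled so that the \emph{expected} total precision is at most $m$, a direct analysis (driving the expected number of false positives down through the geometric shrinkage of the surviving null set, and the expected number of false negatives down through a Gaussian tail bound applied $O(\log n)$ times) gives $\max_{S\in\C'}\E_S[d(\hat S_{\mathrm{MN}},S)]\to 0$ as soon as $\mu^2\gtrsim \frac{n}{m}(\log s+\log\log n)$, which is the claimed bound up to the explicit constants.

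Two minor points have to be dispatched first. The references \cite{malloy:11,malloy:11b} calibrate to a single known sparsity level, whereas $\C'$ contains $s-1,s,s+1$; since these differ by one, calibrating to the largest level $s+1$ controls the risk uniformly over $S\in\C'$ (the thresholds for the three levels differ by $O(1)$, absorbed into the constant $4$ and the $5\log\log_2 n$ slack), and the hypothesis $s+1\le n/((\log_2^2 n)-3)$ is precisely what makes that analysis valid in the present normalization. Second, although \cite{malloy:11,malloy:11b} work under $\E[\sum_k\Gamma_k^2]\le m$, I would run the procedure with expected budget $m'=(1-\rho_n)m$ for some $\rho_n\to 0$: this rescales the magnitude threshold by $(1-\rho_n)^{-1/2}\to 1$, so it is harmless, and it keeps a macroscopic fraction of the budget in reserve.

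The substance of the proof is the budget conversion. Run the $m'$-budget procedure but \emph{truncate} it: keep the running total $B_k=\sum_{j\le k}\Gamma_j^2$ and, the first time a new measurement would push it past $m$, halt all further sensing and output $\hat S=\emptyset$. By construction this $\hat S$ obeys \eqref{eqn:budget} deterministically, and it coincides with $\hat S_{\mathrm{MN}}$ on the event $\Omega=\{B_\infty\le m\}$, where $B_\infty=\sum_k\Gamma_k^2$ is the total precision the untruncated run would have spent. Now $B_\infty$ concentrates sharply: a null coordinate is kept through a given pass only if an independent Gaussian observation lands on the retaining side, so the number of null survivors after $j$ passes is a sum over the (roughly $n-s$) null coordinates of independent $\{0,1\}$ variables, and hence $B_\infty$ is at most $(K+1)(s+1)$ plus a sum of independent variables each supported in $[0,K+1]$ with mean below the per-null budget share. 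Hoeffding's inequality (with $\rho_n$ taken, say, as $(\log n)^{-1}$) then gives $\P_S(\Omega^c)=\P_S(B_\infty>m)\le\exp(-\Omega(n/\log^2 n))$, uniformly in $S\in\C'$. Since $d(\hat S,S)\le\max_{S'\in\C'}|S'|=s+1\le n$ always,
$$\max_{S\in\C'}\E_S[d(\hat S,S)]\ \le\ \max_{S\in\C'}\E_S[d(\hat S_{\mathrm{MN}},S)]\ +\ (s+1)\,\P_S(\Omega^c)\ \longrightarrow\ 0\ ,$$
the first term vanishing by the adapted guarantee of \cite{malloy:11,malloy:11b} and the second being at most $n\exp(-\Omega(n/\log^2 n))\to 0$.

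I expect the main obstacle to be none of the bookkeeping above but rather writing the sequential thresholding procedure out explicitly enough to (i) read off how the per-pass precisions depend on the size of the surviving set, and thereby pin down that the resulting magnitude threshold is exactly $\sqrt{\frac{4n}{m}(2\log(s+1)+5\log\log_2 n)}$ and that the expected total precision is indeed $\le m'$; and (ii) justify the across-coordinate independence used in the concentration step, uniformly over the three sparsity levels and for $s$ allowed to grow almost linearly in $n$. Once the procedure is spelled out these are routine, but they do require re-deriving the analysis of \cite{malloy:11,malloy:11b} in our normalization rather than merely quoting it.
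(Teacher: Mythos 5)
Your proposal follows essentially the same route as the paper's proof: the paper writes out exactly the Malloy--Nowak-style procedure you describe (constant per-measurement precision $p=m/(4n)$, up to $l=\log_2^2 n$ one-sided tests per coordinate, discard on the first negative observation), bounds false positives by $2^{-l}$ per null coordinate and false negatives by $l$ Gaussian tails per signal coordinate, and converts the expected-budget analysis into the hard constraint \eqref{eqn:budget} by truncating when the budget would be exceeded and showing via a concentration bound (Bernstein on the truncated-geometric measurement counts, rather than your Hoeffding variant) that this event has probability $e^{-\Omega(n/\mathrm{polylog}\,n)}$, absorbed by a trivial bound on $d(\hat S,S)$. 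The steps you defer (explicit parameter choices and the re-derivation of the error bounds in this normalization) are precisely the routine computations the paper carries out, and your outline of them is correct; the remaining differences (outputting the partial $\hat S$ instead of $\emptyset$ on truncation, coordinate-wise versus pass-wise ordering of the measurements) are immaterial.
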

This means that provided $x_{\min}$ is of the order $\sqrt{(n/m)(\log
s +\log\log n)}$ we can ensure exact recovery of a sufficiently sparse
signal support with probability approaching 1. The\vadjust{\goodbreak} proposition is
proved in the \hyperref[app]{Appendix}. The constants in the above result are rather
loose, and can be made much tighter (see Malloy and Nowak \cite
{malloy:11}). The $\log
\log n$ term is an artifact of this method (which is parameter adaptive
and agnostic about $s$). This term can be entirely avoided by
considering another procedure, namely by executing in parallel $n$
properly calibrated sequential likelihood ratio tests, which requires
the knowledge of the sparsity level $s$. Such a procedure achieves
precisely the bound in Corollary~\ref{coro:estimation}. Lower bounds
for estimation have been derived under a different set of assumptions
for the class of entry-wise sequential tests in Malloy and Nowak \cite{malloy:11b}. In
contrast, the results in the current paper pertain any adaptive sensing
procedure (and not only entry-wise testing procedures).

Control of the $\FDR+\NDR$ risk was considered in Haupt, Castro and Nowak
\cite{haupt:10} in the
exact setting described in this paper, and the distilled sensing
procedure proposed there is able to achieve the bound in Corollary~\ref
{coro:estimation_FDR} provided $\log\log\log n < s \leq n^{1-\beta}$
for some $0<\beta<1$. Therefore the lower bounds on the $\FDR+\NDR$ risk
are also tight for a wide range of sparsity levels.

\subsection{Relation to compressed sensing}

The proof technique used in Theorem~\ref{thm:estimation} provides some
important insights for the problem of adaptive compressive sensing.
This setting is different than the one considered so far and the
observation model is now of the form
\[
\vec{Y}=\vec{A}\vec{x}+\vec{W} ,
\]
where $\vec{Y}\in\R^l$ denotes the observations, $\vec{A}\in\R
^{l\times n}$ is the design/sensing matrix, $\vec{x}\in\R^n$ is the
unknown signal, and $\vec{W}\in\R^l$ is Gaussian with zero mean an
identity covariance matrix. The rows of $\vec{A}$ can be designed
sequentially, and the $k$th row (denoted by $\vec{A}_{k\cdot}$) can
depend explicitly on $\{Y_j,\vec{A}_{j\cdot}\}_{j=1}^{k-1}$. Note
that $W_k$ is a normal random variable independent of $\{Y_j,\vec
{A}_{j\cdot},W_j\}_{j=1}^{k-1}$ and also independent of $\vec
{A}_{k\cdot}$. This setting is particularly interesting when we impose
some constraints on $\vec{A}$, namely
\[
\E \bigl[\|\vec{A}\|_F^2 \bigr] \leq m ,
\]
where $\|\cdot\|_F$ is the Frobenius matrix norm. Like \eqref
{eqn:budget}, this sensing budget condition is very natural and the
issue of noise is irrelevant without it. Each row $\vec{A}_{k\cdot}$
plays the role of the sensing action $A_k$ in our original scenario,
and $\|\vec{A}_{k\cdot}\|_2^2$ plays the role of the precision
parameter $\Gamma^2_k$ in \eqref{eqn:budget}. As before, we do not
impose any restrictions on the total number of measurements $l$, which
can be potentially infinite. We can show the following result using an
approach similar to that of Theorem~\ref{thm:estimation}.
%
\begin{prop}\label{prop:CS_lowerbound}
Consider the adaptive compressed sensing setting as described above,
with observations $\vec{Y}=\vec{A}\vec{x}+\vec{W}$, where $\vec
{W}$ is Gaussian zero mean with identity covariance matrix and $\E
[\|\vec{A}\|_F^2 ] \leq m$. Let $\mathcal{H}(\mu)\subset
\R^n$ be the class of all vectors $\vec{x}$ with support in $\C'$
(i.e. the support\footnote{Define $\supp(\vec{x})=\{i:x_i\neq0\}$.}
has cardinality $s$, $s+1$ or $s-1$) and the magnitude of the minimum
non-zero entries greater or equal than $\mu$. That is
\[
\mathcal{H}(\mu)=\Bigl\{\vec{x}\in\R^n\dvtx  \supp(x)\in\C'
\mbox{ and } \min_i\bigl\{|x_i|\dvtx x_i
\neq0\bigr\}\geq\mu\Bigr\} .
\]
Let $D=\{\vec{Y},\vec{A}\}$ and $\hat S(D)$ be an arbitrary
estimator. If
%
\begin{equation}\label{eqn:CS_condition}
\max_{\vec{x}\in\mathcal{H}{\mu}} \E_{\vec{x}}\bigl[d(\hat S,S)\bigr]\leq
\epsilon,
\end{equation}
where $0<\epsilon<1$ then necessarily
\[
\mu\geq\sqrt{\frac{2n}{m} \biggl(\log s + \log\frac{n-s}{n+1} + \log
\frac{1}{2\epsilon} \biggr)} .
\]
\end{prop}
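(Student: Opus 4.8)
The plan is to follow the proof of Theorem~\ref{thm:estimation} essentially verbatim, with the single-coordinate sensing actions $A_k$ and precisions $\Gamma^2_k$ replaced by the rows $\vec{A}_{k\cdot}$ of the sensing matrix and the hard budget $\sum_k\Gamma^2_k\leq m$ replaced by the expected Frobenius budget $\E[\|\vec{A}\|_F^2]\leq m$. For $S\subseteq\{1,\ldots,n\}$ write $\vec{x}_S\in\R^n$ for the vector with $(\vec{x}_S)_i=\mu\1\{i\in S\}$; for $S\in\C'$ we have $\vec{x}_S\in{\cal H}(\mu)$, and the worst-case risk over ${\cal H}(\mu)$ dominates the worst-case risk over $\{\vec{x}_S:S\in\C'\}$, so it suffices to work with this sub-family. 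The first step is to reduce to \emph{symmetric} estimators: this sub-family is invariant under coordinate permutations, and permuting the columns of $\vec{A}$ leaves $\|\vec{A}\|_F$ and the causal dependence structure of the rows unchanged, so the randomized symmetrization argument behind Lemma~\ref{lemma:symmetrization} carries over unchanged (permute the columns of $\vec{A}$ by a uniform random permutation, run $\hat S$, invert the permutation on the output). Consequently we may assume $\P_{\vec{x}_S}(\hat S_i\neq 1)\leq\epsilon/|S|$ for $i\in S$ and $\P_{\vec{x}_S}(\hat S_i\neq 0)\leq\epsilon/(n-|S|)$ for $i\notin S$.

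Next, fix $S\in\C$ with $|S|=s$ and for each $i$ set $S^{(i)}=S\Delta\{i\}\in\C'$, and write $\LR_{S,S^{(i)}}$ for the likelihood ratio between $\P_{\vec{x}_S}$ and $\P_{\vec{x}_{S^{(i)}}}$. Since $\vec{x}_S$ and $\vec{x}_{S^{(i)}}$ differ only in coordinate $i$, with $\vec{x}_S-\vec{x}_{S^{(i)}}=\pm\mu\vec{e}_i$, the total-variation/Kullback--Leibler bound \eqref{eqn:keylowerbound} together with symmetry (applied to both $\vec{x}_S$ and $\vec{x}_{S^{(i)}}$) gives exactly the coordinate-wise lower bounds \eqref{eqn:LR_lowerbound1}--\eqref{eqn:LR_lowerbound2}, and summing them over $i$ reproduces verbatim the chain of inequalities following \eqref{eqn:upper_bound_estimation}:
$$\sum_{i=1}^n\E_{\vec{x}_S}\!\left[\log\LR_{S,S^{(i)}}\right]\ \geq\ n\!\left(\log s+\log\frac{n-s}{n+1}+\log\frac{1}{2\epsilon}\right).$$

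The only genuinely new computation is the evaluation of $\E_{\vec{x}_S}[\log\LR_{S,S^{(i)}}]$ in terms of $\mu$. Conditioned on $\vec{A}_{k\cdot}$, the $k$-th observation $Y_k=\vec{A}_{k\cdot}\vec{x}+W_k$ is Gaussian with unit variance whose mean shifts by $\pm\mu A_{ki}$ between the two hypotheses, where $A_{ki}=(\vec{A})_{ki}$; using $\text{KL}(N(a,1)\,\|\,N(b,1))=(a-b)^2/2$ yields $\E_{\vec{x}_S}[\log\LR_{S,S^{(i)}}]=\tfrac{\mu^2}{2}\,\E_{\vec{x}_S}[\sum_k A_{ki}^2]$. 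Summing over $i$ and using the identity $\sum_{i=1}^n\sum_k A_{ki}^2=\|\vec{A}\|_F^2$ together with the budget,
$$\sum_{i=1}^n\E_{\vec{x}_S}\!\left[\log\LR_{S,S^{(i)}}\right]=\frac{\mu^2}{2}\,\E_{\vec{x}_S}\!\left[\|\vec{A}\|_F^2\right]\leq\frac{m\mu^2}{2}.$$
Combining the two displays and solving for $\mu$ gives the claimed bound.

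The main obstacle is bookkeeping rather than a new idea: one must check carefully that column-permutation symmetrization is admissible in the adaptive compressed-sensing model (it preserves the expected-Frobenius budget and the causal dependence of $\vec{A}_{k\cdot}$ on the past) and that passing to the two-point sub-families $\{\vec{x}_S,\vec{x}_{S^{(i)}}\}$ costs nothing. Once this is granted, the substantive observation is that the identity $\|\vec{A}\|_F^2=\sum_{i,k}A_{ki}^2$ plays exactly the role that ``each measurement probes a single coordinate'' played in Theorem~\ref{thm:estimation}: the information the $k$-th row carries about coordinate $i$ is $A_{ki}^2$, and these sum to the total sensing budget, which must therefore be spread across all $n$ coordinates precisely as before.
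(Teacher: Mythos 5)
Your proposal is correct and follows essentially the same route as the paper's proof: reduce to the constant-amplitude sub-family indexed by $S\in\C'$ and to symmetric estimators, reuse the coordinate-wise lower bounds \eqref{eqn:LR_lowerbound1}--\eqref{eqn:LR_lowerbound2} (which depend only on the risk assumption, not on the sensing model), and observe that the only new ingredient is $\E_{S}\bigl[\log\LR_{S,S^{(i)}}\bigr]=\tfrac{\mu^2}{2}\E\bigl[\sum_k A_{ki}^2\bigr]$, which summed over $i$ gives $\tfrac{\mu^2}{2}\E\bigl[\|\vec{A}\|_F^2\bigr]\leq\tfrac{m\mu^2}{2}$. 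The paper reaches the same identity by expanding the Gaussian log-densities directly rather than via a conditional KL computation, which is an equivalent calculation.
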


The proof of the proposition can be found in the \hyperref[app]{Appendix}. In
Arias-Castro, Cand\`{e}s and Davenport \cite{arias-castro:11}, the authors
derived lower bounds for both support
recovery and mean square error risk for adaptive compressive sensing.
In their setting $l=m$, and each row of the matrix $\vec{A}$ has
expected norm at most 1. These two constraints imply the Frobenius norm
constraint in Proposition~\ref{prop:CS_lowerbound}. Theorem~2 in that
paper states that the minimum signal amplitude $x_{\min}$ must be
greater than $\sqrt{n/m}$ to ensure that support recovery is possible
within the class of all possible $s$-sparse signals. In contrast, our
result shows that the lower bound is not entirely tight. Formally, if
$s=\mathrm{o}(n)$ and
\[
\lim_{n\rightarrow\infty} \max_{S\in\C'} \E_S
\bigl[d(\hat S_n,S)\bigr]=0
\]
we have necessarily
\[
x_{\min} = \sqrt{2\frac{n}{m} (\log s +\omega_n )} ,
\]
as $n\rightarrow\infty$. So, the above result improves the bound in
Arias-Castro, Cand\`{e}s and Davenport \cite{arias-castro:11} by a $\log s$
factor. In light of the recent
results in Haupt \textit{et al.} \cite{haupt:12}, it seems plausible
that this is a
necessary and sufficient term. However, a precise characterization of
these limits remains an open problem.

\section{Conclusion}

In this paper, we presented several lower bounds for detection and
estimation of sparse signals using adaptive sensing. These results
bridge a gap in our understanding of adaptive sensing and show that
methodologies recently proposed in the literature are nearly optimal. A
very interesting insight is that, for signal detection, the sparsity
structure is essentially irrelevant. The intuition being that for
detection it suffices to identify one non-zero component, and cues
provided\vadjust{\goodbreak} by the structure are not too useful under adaptive sensing
scenarios. However, for signal estimation it is not clear if structure
helps, which raises many interesting directions for future research.

\begin{appendix}

\section*{Appendix}\label{app}
\renewcommand{\theequation}{5.\arabic{equation}}
\setcounter{equation}{0}
\begin{pf*}{Proof of Lemma~\ref{lemma:optimization}}
We begin by proving the first result. Let
\[
b'_i= \left\{ %
\begin{array} {l@{\qquad}l} m/|\Xi| &
\mbox{if } i\in\Xi,
\\
0 & \mbox{otherwise}, \end{array} %
\rright. \qquad i=1,\ldots,n .
\]
Begin by noticing that
\[
\sup_{\vec{b}\in\R_0^+: \sum_{i=1}^n b_i=m} \min_{S\in\C} \sum
_{i\in S} b_i \geq\min_{S\in\C} \sum
_{i\in S} b'_i=
\frac{ms}{|\Xi
|} .
\]
The proof proceeds by contradiction, and makes use of a probabilistic
argument. Suppose there is a vector $\vec{b}^*\in\R_0^+$ such that
$\sum_{i=1}^n b^*_i\leq m$ and
%
\begin{equation}\label
{eqn:contradiction}
\min_{S\in\C} \sum_{i\in S}
b^*_i>\frac{ms}{|\Xi|} .
\end{equation}
We show next that this in contradiction with the symmetry assumption.

Let $J$ be a uniform random variable with range $\Xi$. Then
%
\begin{equation}\label{eqn:E_J}
\E\bigl[b^*_J\bigr]=\frac{1}{|\Xi|}\sum
_{j\in\Xi} b^*_j\leq\frac{1}{|\Xi
|}\sum
_{j=1}^n b^*_j \leq\frac{m}{|\Xi|}
.
\end{equation}
Now construct another random variable $K$ in a hierarchical fashion:
first take $S$ drawn uniformly over $\C$, and given $S$ take $K$ drawn
uniformly over $S$.
Then clearly
%
\begin{eqnarray}\label{eqn:E_K}
\E\bigl[b^*_K\bigr] &=& \E\bigl[\E \bigl[
b^*_K \rrvert S \bigr] \bigr]
\nonumber
\\
&=& \E \biggl[\frac{1}{s}\sum_{k\in S}
b^*_k \biggr]
\nonumber
\\
&\geq& \E \biggl[\min_{S\in\C} \frac{1}{s}\sum
_{k\in S} b^*_k \biggr]
\\
&=& \frac{1}{s} \E \biggl[\min_{S\in\C} \sum
_{k\in S} b^*_k \biggr]
\nonumber
\\
&>& \frac{m}{|\Xi|} ,\nonumber
\end{eqnarray}
where the strict inequality follows from \eqref{eqn:contradiction}. To
conclude the proof, we just need to notice that $J$ and $K$ have
exactly the same distribution if the class $\C$ is symmetric. Let
$k\in\Xi$ be arbitrary. Then
\begin{eqnarray*}
\P(K=k) &=& \E\bigl[\1\{K=k\}\bigr]
\\
&=& \E\bigl[\E\bigl[\1\{K=k\} | S \bigr]\bigr]
\\
&=& \E \biggl[\frac{1}{s} \1\{k\in S\} \biggr]
\\
&=& \frac{1}{s} \P(k\in S)
\\
&=& \frac{1}{s} \frac{s}{|\Xi|}=\frac{1}{|\Xi|} .
\end{eqnarray*}
Therefore, both $J$ and $K$ are uniformly distributed over $\Xi$ and
so $\E[b^*_J]=\E[b^*_K]$. This creates a contradiction between \eqref
{eqn:E_J} and \eqref{eqn:E_K} invalidating the existence of vector
$\vec{b}^*$, concluding the proof.

For the second result, note simply that
\begin{eqnarray*}
\frac{1}{|\C|} \sum_{S\in\C} \sum
_{i\in S} b_i &=& \frac{1}{|\C
|} \sum
_{S\in\C} \sum_{i=1}^n
b_i \1\{i\in S\}
\\
&=& \sum_{i=1}^n b_i
\frac{1}{|\C|} \sum_{S\in\C} \1\{i\in S\}
\\
&=& \sum_{i=1}^n b_i
\frac{s}{|\Xi|} ,
\end{eqnarray*}
where the last step follows from the symmetry assumption. The result of
the lemma is now immediate.
\end{pf*}
%
%
\begin{pf*}{Proof of Proposition~\ref{prop:detection_other_risks}}
If $\tilde R(\hat\Phi)<\epsilon/2$, the result follows immediately
from the simple fact that $R(\hat\Phi) \leq2\tilde R(\hat\Phi)$.
Therefore, $\tilde R(\hat\Phi)<\epsilon/2$ implies that $R(\hat\Phi
)<\epsilon$ and we just apply the result of the theorem. For the
second statement, it is useful to look at $S$ as a uniform random
variable with range $\C$. In the proof of Theorem~\ref
{thm:detection}, we showed that, for any $S\in\C$
\[
\E_\emptyset[\log\LR_{\emptyset,S}|S] \geq-\log \bigl(2\P
_\emptyset(\hat\Phi\neq0)+2\P_1(\hat\Phi\neq1|S) \bigr)
,
\]
where $\P_1$ denotes the probability measure under the alternative
hypothesis. By taking the expectation on both sides, we have
\[
\E_\emptyset[\log\LR_{\emptyset,S}] \geq-\frac{1}{|\C|} \sum
_{S\in\C} \log \bigl(2\P_\emptyset(\hat\Phi\neq0)+2
\P_1(\hat \Phi\neq1|S) \bigr) .
\]
To simplify the notation, let $p_0\equiv\P_\emptyset(\hat\Phi\neq
0)$ and $p_S\equiv\P_1(\hat\Phi\neq1|S)$. The statement $\bar
R(\hat\Phi)\leq\epsilon$ is equivalent to $p_0+\frac{1}{|\C|}\sum_{S\in\C} p_S\leq\epsilon$. Accordingly define the constraint set
$\mathcal{P}\subseteq\R^{1+|\C|}$ as
\[
\mathcal{P}= \biggl\{p_0,\{p_S\}_{S\in\C}\dvtx
p_0+\frac{1}{|\C|}\sum_{S\in\C}
p_S\leq\epsilon \biggr\} .
\]
We have that\vspace*{-1pt}
%
\begin{eqnarray}
\E_\emptyset[\log\LR_{\emptyset,S}] &\geq& \min_{\mathcal{P}}
\biggl\{-\frac{1}{|\C|} \sum_{S\in\C} \log
(2p_0+2p_S ) \biggr\}
\\
&=& \log\frac{1}{2\epsilon} ,
\end{eqnarray}
where the last step follows from a straightforward Lagrange multiplier
argument, to conclude that the minimum is attained by taking
$p_0+p_S=\epsilon$ for all $S\in\C$.

The next step, similar to the proof of Theorem~\ref{thm:detection}, is
to solve $\sup_{\mathcal{A}} \E_\emptyset[\log\LR_{\emptyset
,S}]$, where it is important to recall that $S$ is random. Following
the same approach as in the proof of the theorem yields
\begin{eqnarray*}
\sup_{\mathcal{A}} \E_\emptyset[\log\LR_{\emptyset,S}] &=&
\frac
{\mu^2}{2} \sup_{\vec{b}\in\R_0^+: \sum_{i=1}^n b_i=n} \frac
{1}{|\C|}\sum
_{S\in\C} \sum_{i\in S} b_i
,
\end{eqnarray*}
where $b_i$ is defined in \eqref{eqn:b}. The second result of Lemma~\ref{lemma:optimization} characterizes the solution of this
optimization problem, and therefore
\[
\frac{\mu^2 m s}{2|\Xi|}\geq\log\frac{1}{2\epsilon} .
\]
Simple algebraic manipulation concludes the proof.
\end{pf*}
\begin{pf*}{Proof of Lemma~\ref{lemma:symmetrization}}
To ease the notation let $\C_s$ denote the class of all subsets of $\{
1,\ldots,n\}$ with cardinality $s$. Let $S\in\C_s$ and $i\in S$ be
fixed, but arbitrary. Note that the permutation $\perm$ maps this set
to another set $S^{(\perm)}=\perm(S)\in\C_s$ with the same
cardinality. Furthermore, since the permutation is chosen uniformly
over the set of all permutations this set is uniformly distributed over
$\C_s$, that is
\[
S^{(\perm)}\sim\Unif(\C_s) .
\]
In addition define the random variable $J=\perm(i)$. This is obviously
uniformly distributed over $\{1,\ldots,n\}$. More importantly,
conditionally on $S^{(\perm)}$, $J$ is uniformly distributed over the
set $S^{(\perm)}$. In other words, for arbitrary $k\in\{1,\ldots,n\}$
\begin{eqnarray*}
\P\bigl(J=k|S^{(\perm)}\bigr) &=& \P\bigl(\perm(i)=k|S^{(\perm)}\bigr)
\\
&=& \P\bigl(\perm^{-1}(k)=i|S^{(\perm)}\bigr)
\\
&=& \left\{ %
\begin{array} {l@{\qquad}l} 1/s & \mbox{if } k\in S^{(\perm)},
\\
0 & \mbox{otherwise}. \end{array} %
\rright.
\end{eqnarray*}
Therefore
\begin{eqnarray*}
\P\bigl(\hat S^{(\perm)}_i\neq1\bigr) &=& \E \bigl[\1\{\hat
S_{\perm(i)}\neq 1\} \bigr]
\\
&=& \E\bigl[\E \bigl[ \1\{ \hat S_{\perm(i)}\neq1\}\rrvert
S^{(\perm)} \bigr] \bigr]
\\
&=& \E \biggl[\frac{1}{s}\sum_{j\in S^{(\perm)}}
\P_{S^{(\perm)}} (\hat S_j\neq1) \biggr]
\\
&=& \frac{1}{|\C_s|}\sum_{S'\in\C_s} \frac{1}{s}
\sum_{j\in S'} \P_{S'} (\hat S_j
\neq1) ,
\end{eqnarray*}
where the two last steps follow from the distribution of $S^{(\perm)}$
and $\perm(i)$. The case $i\notin S$ is entirely analogous. Using
these two results we obtain the first two statements of the lemma for
the class $\C'=\C_{s-1}\cup\C_{s}\cup\C_{s+1}$. Finally, the last
result in the lemma follows trivially from the other two statements.
\end{pf*}
\begin{pf*}{Sketch proof of Corollary~\ref{coro:estimation_FDR}}
The result in the corollary follows in the same manner as the result in
Theorem~\ref{thm:estimation}, but noticing that for symmetric
estimation procedures the requirements on the estimator $\hat S_i$ for
each $i\in\{1,\ldots,n\}$ are much less stringent. In particular let
$S\in\C'$ be arbitrary and assume that
\[
R_{\FDR+\NDR}(\hat S,S) \leq\epsilon,
\]
where $\epsilon>0$, which implies that both FDR and NDR are less than
$\epsilon$. Now consider symmetric procedures and let $\alpha=\P
_S(\hat S_i\neq0)$ for $i\notin S$ and $\beta=\P_S(\hat S_i\neq1)$
for $i\in S$. Clearly, the constraint in NDR implies that
\[
\epsilon\geq\NDR(\hat S,S)=\E \biggl[\frac{|S \setminus\hat
S|}{|S|} \biggr]=
\frac{|S|\beta}{|S|}=\beta.
\]
The constraint on FDR is a bit more difficult to analyze, due to the
random denominator its definition. However, a very sloppy bound
suffices, namely
\[
\epsilon\geq\FDR(\hat S,S) = \E \biggl[\frac{|\hat S\setminus
S|}{|\hat S|} \biggr] \geq\E \biggl[
\frac{|\hat S\cap S^c|}{n} \biggr] = \frac{(n-|S|)\alpha}{n} .
\]
Therefore, we conclude that $\alpha\leq\epsilon$ suffices. Note that
this is a very loose but nevertheless sufficient bound. The rest of the
proof proceeds now in the same fashion as Theorem~\ref{thm:estimation}
and Corollary~\ref{coro:estimation}.
\end{pf*}
\begin{pf*}{Proof of Proposition~\ref{prop:CS_lowerbound}}
The proof of this result mimics closely the proof of Theorem~\ref
{thm:estimation}, with the necessary changes to account for the
different sensing model. The first step is to reduce the class of
signals under consideration. Clearly signals of the form \eqref
{eqn:simple_model} are also in the class $\mathcal{H}(\mu)$. Therefore
\[
\max_{\vec{x}\in\mathcal{H}{\mu}} \E_{\vec{x}}\bigl[d(\hat S,S)\bigr]\geq \max
_{S\in\C'} \E_S\bigl[d(\hat S,S)\bigr] ,
\]
where the expectation on the right-hand-side is taken assuming $\vec
{x}$ is of the form \eqref{eqn:simple_model} with support~$S$.
Condition \eqref{eqn:CS_condition} therefore implies that
\[
\max_{S\in\C'} \E_S\bigl[d(\hat S,S)\bigr]\leq
\epsilon,
\]
so, for the purpose of computing a lower bound it suffices to consider
on the signals where all the non-zero components are valued $\mu$. It
is important to note that this subclass of signals might not correspond
to the ``hardest'' signals to estimate, and no claim is made about
this. However, this subclass seems to capture the essential aspects of
the problem in light of the bounds derived. As the class of signals
under consideration is the same as in Theorem~\ref{thm:estimation} the
only change in that proof stems from the different observation model,
which in turn results in a different log-likelihood ratio. Notice that,
as before, we can consider only symmetric procedures in the sense of
Lemma~\ref{lemma:symmetrization}.

To aid in the presentation, let $A_{ij}$ denote the entry in the $i$th
row and $j$th column of the matrix $\vec{A}$, and let $\vec
{A}_{i\cdot}$ and $\vec{A}_{\cdot j}$ denote respectively the $i$th
row of and the $j$th column of $\vec{A}$. The log-likelihood ratio is
therefore given by
\begin{eqnarray*}
\log\LR_{S,S'}(\vec{Y},\vec{A}) &=& \log\frac{f(\vec{Y},\vec
{A};S)}{f(\vec{Y},\vec{A};S')}
\\
&=& \sum_{k=1}^\ell\log\frac{f_{Y_k|\vec{A}_{k\cdot}}(Y_k|\vec
{A}_{k\cdot};S)}{f_{Y_k|\vec{A}_{k\cdot}}(Y_k|\vec{A}_{k\cdot
};S')}
\\
&=& \frac{1}{2}\sum_{k=1}^\ell
\biggl[ \biggl(Y_k-\mu\sum_{j\in S'}
A_{kj} \biggr)^2- \biggl(Y_k-\mu\sum
_{j\in S} A_{kj} \biggr)^2 \biggr] .
\end{eqnarray*}
Given this, the expected log-likelihood ratio can be computed quite
easily as before, and we get
%
\begin{equation}\label{eqn:CS_LR}
\E_S \bigl[\log\LR_{S,S'}(\vec{Y},\vec{A}) \bigr] =
\frac{\mu^2}{2} \sum_{k=1}^\ell
\E_S \biggl[ \biggl( \biggl(\sum_{j\in S}
A_{kj} \biggr)- \biggl(\sum_{j\in S'}
A_{kj} \biggr) \biggr)^2 \biggr] .
\end{equation}
Now consider the sets $S^{(i)}$ as in the proof of Theorem~\ref
{thm:estimation}. Since we have $S\Delta S^{(i)}=\{i\}$, we get from
equation \eqref{eqn:CS_LR}
%
\begin{eqnarray}\label{eqn:CS_LR_i}
\E_S \bigl[\log\LR_{S,S^{(i)}}(\vec{Y},\vec{A}) \bigr] &=&
\frac
{\mu^2}{2} \E \Biggl[\sum_{k=1}^\ell
A_{ki}^2 \Biggr]
\nonumber
\\[-8pt]\\[-8pt]
&=& \frac{\mu^2}{2} \E \bigl[\|\vec{A}_{\cdot i}\|_F^2
\bigr] .\nonumber
\end{eqnarray}
From this point on, the proof proceeds in exactly the same fashion as
that of Theorem~\ref{thm:estimation}. Begin by summing the terms
\eqref{eqn:CS_LR_i} over $i\in\{1,\ldots,n\}$ to get an upper bound
on the expected likelihood ratio
%
\begin{equation}\label{eqn:CS_upper_bound}
\sum_{i=1}^n \E_S [ \log
\LR_{S,S^{(i)}} ] = \frac{\mu
^2}{2} \E \Biggl[\sum
_{i=1}^n \|\vec{A}_{\cdot i}
\|_F^2 \Biggr] = \frac{\mu^2}{2} \E \bigl[\|\vec{A}
\|_F^2 \bigr] \leq\frac{m \mu
^2}{2} .
\end{equation}
Finally, the lower bounds on the log-likelihood ratio in \eqref
{eqn:LR_lowerbound1} and \eqref{eqn:LR_lowerbound2} are not dependent
on the nature of the likelihood ratio itself, but rather on the desired
risk performance. So these bounds are valid in the compressed sensing
setting as well. As in the proof of Theorem~\ref{thm:estimation},
using these lower bounds together with \eqref{eqn:CS_upper_bound}
concludes the proof.
\end{pf*}

\begin{pf*}{Proof of Proposition~\ref{prop:estimation_upperbound}}
We begin by introducing an algorithm that achieves the desired
performance bound. Algorithm \ref{alg:SDS} is described here for
convenience of presentation and explained in detail in the next
paragraphs. It is essentially the algorithm presented in Malloy and Nowak \cite{malloy:11b} for the case of Gaussian observation noise.
\begin{algorithm}[t]
\caption{Simple distilled sensing.}\label{alg:SDS}
\SetKw{KwParameters}{Parameters:}
\SetKw{KwInitialization}{Initialization:}
\SetKw{KwReturn}{Terminate:}
\KwParameters{Number of steps $l$ and per-measurement precision $p$}\\
\KwInitialization{}\\
\qquad $ k\leftarrow0$, $i \leftarrow1$, $\hat S\leftarrow\emptyset$\\
\qquad $ c_i\leftarrow0$ for $i=1,\ldots,n$\\
\qquad $ \Gamma^2_j\leftarrow p$ for $j=1,2,\ldots$\\
\For{$i\leftarrow1$ \KwTo$n$}
{
\Repeat{$c_i=l$ or $Y_k<0$}
{
$k\leftarrow k+1$\\
$c_i\leftarrow c_i+1$\\
Measure $Y_k\equiv Y_i^{(c_i)}=x_i+\Gamma_k^{-1} W_k$\\
\If{$p(k+1)>m$}{\KwReturn{Output $\hat S$}}
}
\If{$c_i=l$ and $Y_k\geq0$}{$\hat S \leftarrow\hat S \cup\{i\}$}
}
\KwReturn{Output $\hat S$}
\end{algorithm}

Sensing is performed coordinate-wise in a sequential way, until all the
signal entries have been explored or the total sensing budget is
exhausted. Note that all the measurements are made with the same
precision $p$. For each signal entry $i$ the algorithm performs at most
$l$ measurements. If any of these measurements is negative then entry
$i$ is deemed not to belong to the support estimate~$\hat S$. If all
the $l$ measurements are non-negative, then entry $i$ is deemed to
belong to the support estimate. For convenience, we identify the
measurements of entry $i$ by $Y_i^{(j)}$, where $j\in\{1,\ldots,l\}$.

In a sense, the algorithm is a very crude version of a sequential
likelihood ratio test. Given that we are interested in the general
rates of error decay we do not optimize the algorithm parameters for
performance and instead make crude choices that are sufficient to prove
the result. In particular we take $p=m/(4n)$ and $l=\log_2^2 n$.

The proof goes by showing first that, with high probability, the
algorithm terminates before reaching the total sensing budget.
Therefore, for the analysis we consider a modification of the algorithm
were termination upon the event $p(k+1)>m$ is removed. Note that the
number of measurements collected for entry $i$ is simply $c_i$. These
are independent random variables. The total number of measurements
collected is $\sum_{i=1}^n c_i$. Note that for all $i$ we have $0\leq
c_i\leq l$. Furthermore, note that for $i\notin S$ the corresponding
measurements $Y_i^{j}$ are zero mean normal random variables, which
means that $\P_S(Y_i^{(j)}<0)=1/2$. Therefore, $c_i$ corresponds to a
truncated geometric random variable:
\[
i\notin S,\qquad  \P_S(c_i=x)= \left\{ %
\begin{array}
{l@{\qquad}l} (1/2)^x & \mbox{if } x=1,\ldots,l-1,
\\
(1/2)^{l-1} & \mbox{if } x=l,
\\
0 & \mbox{otherwise}. \end{array} %
\rright.
\] %
Since these are truncated geometric random variables it is
clear that $\E_S(c_i)\leq2$ and $\V_S(c_i)
\leq2$. Now, Bernstein's inequality (as stated in Wasserman
\cite{wasserman:06}, page 9) tells us immediately that %
\[
\P_S \biggl(\sum_{i\notin S} c_i
-2(n-s) \geq t \biggr)\leq\exp \biggl(-\frac{1}{2}\frac{t^2}{2(n-s)+lt/3}
\biggr) .
\]
Taking $t=n-s$, and noting that $\sum_{i=1}^n c_i \leq sl+\sum_{i\notin S} c_i$ we conclude that
%
\begin{equation}
\P_S \Biggl(\sum_{i=1}^n
c_i < 3(n-s) +sl \Biggr)\geq1-\exp \biggl(-\frac{1}{2}
\frac{n-s}{2+l/3} \biggr) .
\end{equation}
Now, provided $s\leq n/(l-3)$, we conclude that the total number of
measurements of the algorithm is smaller than $4n$ with probability
approaching 1 as $n$ grows, that is
%
\begin{equation}
\P_S \Biggl(\sum_{i=1}^n
c_i < 4n \Biggr)\geq1-\exp \biggl(-\frac
{1}{2}
\frac{n-s}{2+l/3} \biggr) .
\end{equation}
Therefore the total amount of precision used is under $4np$ with high
probability. For the choice $p=m/(4n)$, the total amount of precision
used is less than $m$ with high probability. In other words,
%
\begin{equation}\label{eqn:HP}
\P_S\bigl(p(k+1)> m\bigr)\leq\exp \biggl(-\frac{1}{2}
\frac{n-s}{2+l/3} \biggr) .
\end{equation}

This result ensures the modified algorithm is essentially the same as
the original one, as in the latter we will rarely encounter the event
$p(k+1)> m$ (this statement will be made precise later). Therefore, we
can proceed by analyzing the performance of the modified algorithm.
This can be done in a entry-wise fashion and we must consider the cases
$i\in S$ and $i \notin S$. For $i\notin S$, note that
\[
\P_S(i\in\hat S)=\P_S \Biggl(\bigcap
_{i=1}^l \bigl\{Y_i^{(j)}\geq0
\bigr\} \Biggr)=\frac{1}{2^l} .
\]
For $i\in S$, we have
\[
\P_S(i\notin\hat S)\leq\P_S \Biggl(\bigcup
_{i=1}^l \bigl\{Y_i^{(j)}<0
\bigr\} \Biggr)=\frac{l}{2}\exp \biggl(-\frac{p\mu^2}{2} \biggr) ,
\]
where the result follows from a Gaussian tail and the union (of events)
bounds. These two results together give
\begin{eqnarray*}
\E_S\bigl[d(\hat S,S)\bigr] &=& \sum_{i\notin S}
\P_S(i\in\hat S) + \sum_{i\in S}
\P_S(i\notin\hat S)
\\
&\leq& \frac{n-s}{2^l} + \frac{sl}{2}\exp \biggl(-\frac{p\mu
^2}{2}
\biggr)
\\
&\leq& \frac{n-s}{2^l} + \frac{1}{2}\exp \biggl(-\frac{p\mu
^2-2\log s-2\log l}{2}
\biggr) .
\end{eqnarray*}
Now, given the choice $l=\log^2_2 n$ we conclude that the first term
in the above summation converges to 0 as $n\rightarrow\infty$, and
the second term also converges to zero provided
\[
-p\mu^2-2\log s-2\log l\rightarrow\infty
\]
as $n\rightarrow\infty$. Clearly if $\mu\geq\sqrt{\frac
{4n}{m}(2\log s+5\log\log_2 n)}$ this condition is satisfied for $n$
large enough. To conclude the proof all that remains to be done is to
take equation \eqref{eqn:HP} into account to conclude that, for the
original algorithm
\begin{eqnarray*}
\E_S\bigl[d(\hat S,S)\bigr] &\leq& \E_S\bigl[d(\hat
S,S)|p(k+1)\leq m\bigr]+\E_S\bigl[d(\hat S,S)|p(k+1)> m\bigr]
\P_S\bigl(p(k+1)> m\bigr)
\\
&\leq& \E_S\bigl[d(\hat S,S)|p(k+1)\leq m\bigr]+n\P_S
\bigl(p(k+1)> m\bigr)
\\
&\leq& \frac{n-s}{2^l} + \frac{1}{2}\exp \biggl(-\frac{p\mu
^2-2\log s-2\log l}{2}
\biggr)+n\exp \biggl(-\frac{1}{2}\frac
{n-s}{2+\log^2_2 n/3} \biggr) .
\end{eqnarray*}
Clearly, under the condition $s\leq n/(l-3)$ all the terms above
converge to zero as $n\rightarrow\infty$, concluding the proof.
\end{pf*}

\end{appendix}
\section*{Acknowledgements}

 I  want to thank Nikhil Bansal for suggesting the
elegant proof of Lemma~\ref{lemma:optimization}. The modification of
DS proposed in Section~\ref{sec:tightness_detection} came into being
after discussions with Jarvis Haupt. Finally, the author wants to thank
the two anonymous referees and the associate editor for their valuable
comments and suggestions.



\printhistory

\end{document}